\newtheorem{theorem}{Theorem}[section]
\newtheorem{lemma}[theorem]{Lemma}
\newtheorem{proposition}[theorem]{Proposition}
\newtheorem{corollary}[theorem]{Corollary}
\theoremstyle{definition}
\newtheorem{definition}[theorem]{Definition}
\theoremstyle{remark}
\numberwithin{equation}{section}
\begin{document}

\setcounter{page}{1}

\title[Functional characterizations of trace spaces in Lipschitz domains]{
Functional characterizations of trace spaces\\ in Lipschitz domains}


\author[S. Touhami, A. Chaira, \MakeLowercase{and} D. F. M. Torres]{
Soumia Touhami,$^1$ Abdellatif Chaira,$^1$ \MakeLowercase{and} Delfim F. M. Torres$^{2*}$}

\address{$^{1}$Universit\'{e} Moulay Ismail, Facult\'{e} des Sciences, 
Laboratoire de Math\'{e}matiques et leures Applications, 
Equipe EDP et Calcul Scientifique, BP 11201 Zitoune, 50070 Mekn\`{e}s, Morocco.}

\email{\textcolor[rgb]{0.00,0.00,0.84}{touhami16soumia@gmail.com; a.chaira@fs.umi.ac.ma}}


\address{$^{2}$Center for R\&{}D in Mathematics and Applications (CIDMA), 
Department of Mathematics, University of Aveiro, 3810-193 Aveiro, Portugal.}

\email{\textcolor[rgb]{0.00,0.00,0.84}{delfim@ua.pt}}


\dedicatory{This paper is dedicated to Professor Ronald G. Douglas}


\let\thefootnote\relax\footnote{\ }

\subjclass[2010]{Primary 46E35; Secondary 47A05, 15A09.}

\keywords{Lipschitz domains, trace spaces, trace operators, and Moore--Penrose pseudo-inverse.}

\date{Submitted: August 21, 2018; Accepted: November 30, 2018.
\newline \indent $^{*}$Corresponding author}


\begin{abstract}
Using a factorization theorem of Douglas,  	
we prove functional characterizations of trace spaces 
$H^s(\partial \Omega)$ involving a family 
of positive self-adjoint operators. 
Our method is based on the use of a suitable operator 
by taking the trace on the boundary $\partial \Omega$ of a 
bounded Lipschitz domain $\Omega \subset \mathbb R^d$ 
and applying Moore--Penrose pseudo-inverse properties 
together with a special inner product on $H^1(\Omega)$. 
Moreover, generalized results of the Moore--Penrose 
pseudo-inverse are also established.
\end{abstract}

\maketitle


\section{\textbf{Introduction}}

The class of Lipschitz domains covers most cases that arise in applications 
of partial differential equations and the characterization of the trace 
spaces $H^s(\partial \Omega)$ on this class is an important tool 
in the analysis of boundary value problems (BVPs) \cite{MR3266243,MR2369871}.
The usual descriptions of the trace spaces, well investigated by Adams and Fournier \cite{Ad}, 
Dautray and Lions \cite{Dau}, Lions and Magenes \cite{LMa}, and McLean \cite{Mc}, 
use Fourier transforms and local diffeomorphisms of the domain into a half space. 
However, many other descriptions do exist \cite{Auch,MR3735591,MR3755723}. 
Let us emphasize that, in the case of Lipschitz domains, not all the ways of characterizing 
$H^s(\partial \Omega)$ make sense and, if they do, they are not necessarily equivalent.
Here we investigate BVPs in Lipschitz domains with boundary data in $H^s(\partial \Omega)$, 
which are well-recognized as difficult and challenging problems \cite{MR3840887,MR3797189}.

Our main results provide natural descriptions of the trace spaces 
$H^s(\partial \Omega)$ for $|s|\leq 1$ on a bounded Lipschitz domain 
$\Omega \subset \mathbb R^d$, $d\geq 2$, using the trace operator 
$\Gamma_s: H^{s+1/2}(\Omega) \longrightarrow H^{s}(\partial \Omega)$ 
and its Moore--Penrose pseudo-inverse. 
The obtained descriptions use a theorem due to Douglas \cite{Do},
which asserts the equivalence of (i) $range\ [A] \subset range\ [B]$, 
(ii) $A A^{*} \leq \lambda^2 B B^{*}$ for some $\lambda \geq 0$, and 
(iii) the existence of a bounded $C$ on $\mathcal{H}$ such that $A=BC$, where 
$A$ and $B$ are bounded operators on the Hilbert space $\mathcal{H}$
and $C$ is constructed in such a way that $null [A]= null [C]$. 
This factorization theorem of Douglas has shown to be an important
result in many different contexts and with many interesting
implications: see, e.g., \cite{MR3779663,MR3783547,MR3760326}
and references therein. For example, Douglas' theorem can be used to solve the 
strong Parrott theorem obtained by Foias and Tannenbaum
\cite{MR0972228} and it solves the $(2,1)$-Parrott problem 
with some restrictions, as well as the Bakonyi--Woerdeman theorem 
obtained by Bakonyi and Woerdeman in \cite{MR1145412}:
see \cite{MR3626684}. Here we construct a family of Hilbert spaces describing 
$H^s(\partial \Omega)$. The resulting spaces form an interpolation family
that make our characterizations rich enough to deal with 
the regularity of the domain when adopting boundary integral methods 
in the analysis of BVPs in Lipschitz domains.  
This makes the obtained descriptions particularly interesting.
For example, our results are useful to give a boundary formula 
for the solution of the Dirichlet problem for the Laplacian 
in Lipschitz domains with boundary data in $H^s$ for some values of $s$. 

The paper is organized as follows.
In Section~\ref{sec:02}, we provide 
necessary definitions, we fix main notation,
and we recall the important factorization theorem 
of Douglas (Theorem~\ref{thm:DT}), which plays an important
role in the proof of our main results.
In Section~\ref{sec:03}, we begin by defining
the Moore--Penrose inverse operator,
we recall one of its main properties
(Lemma~\ref{lemma:prel:01}),
as well as some Labrousse identities 
(Propositions~\ref{prop:3.1} and \ref{prop:3.2}). 
Then, we prove our first original results:
Proposition~\ref{OR:01} and Theorems~\ref{OR:02} and \ref{OR:03}.
Section~\ref{sec:04} is dedicated to recall the notion
of Sobolev spaces on Lipschitz domains.
In particular, we define Lipschitz continuous boundary 
and Lipschitz domain (Definition~\ref{def:LCB}),
and recall some fundamental properties of
the Sobolev spaces (Lemmas~\ref{lem:4.2} and \ref{lem:4.3}).
In Section~\ref{sec:05}, we recall some properties
of the trace (Lemma~\ref{lem:5.1}) and embedding operators
(Lemma~\ref{lem:5.2}), we obtain a Green's formula
(Corollary~\ref{prop:GF}), while new useful results
on trace (Theorems~\ref{prop:5.3}, \ref{thm:5.5} and \ref{thm:5.6})
and embedding operators (Theorem~\ref{thm:5.7}) are proved.
We end with Section~\ref{sec:06}, where our main results
providing functional characterizations of trace spaces 
in Lipschitz domains are given (Theorem~\ref{thm:6.2},
Proposition~\ref{prop:usesDT} and Theorems~\ref{thm:6.6} 
and \ref{prop2:usesDT}), as well as some new results
on equivalence of norms (Theorem~\ref{cor:usesDT}
and its Corollaries~\ref{cor:6.9} and \ref{cor:6.10}).


\section{\textbf{Definitions, notation and the factorization theorem of Douglas}}
\label{sec:02}

Let $\mathcal H_1$ and $\mathcal H_2$ be two Hilbert spaces with inner products 
$(\cdot,\cdot)_{\mathcal H_1}$ and $(\cdot,\cdot)_{\mathcal H_2}$ and associated 
norms $\|\cdot\| _{\mathcal H_1}$ and $\|\cdot\| _{\mathcal H_2}$, respectively.  
Let us first fix some notations. By $\mathcal L(\mathcal H_1, \mathcal H_2)$, 
we denote the space of all linear operators from $\mathcal H_1$ 
into $\mathcal H_2$ and $\mathcal L(\mathcal H_1, \mathcal H_1)$ 
is briefly denoted by $\mathcal L(\mathcal H_1)$. For an operator 
$A\in \mathcal L(\mathcal H_1,\mathcal H_2)$, 
$\mathcal{D}(A)$, $\mathcal R(A)$ and $\mathcal N(A)$ 
denote its domain, its range and its null space, respectively. 
For $A, B \in \mathcal L(\mathcal H_1, \mathcal H_2)$,  
$B$ is called an extension of $A$ if $ \mathcal  D(A)\subset  \mathcal D(B)$ 
and $Ax  =  Bx$ for all $x\in \mathcal D(A)$, and this fact is denoted by  
$A\subset B$. The set of all bounded operators from $\mathcal H_1$ 
into $\mathcal H_2$ is denoted by $\mathcal B(\mathcal H_1, \mathcal H_2)$, 
while $\mathcal B(\mathcal H_1, \mathcal H_1)$ is briefly denoted 
by $\mathcal B(\mathcal H_1)$. The set of all closed densely defined 
operators from $\mathcal H_1$ into $\mathcal H_2$ is denoted 
by $\mathcal C(\mathcal H_1, \mathcal H_2)$, and 
$\mathcal C(\mathcal H_1, \mathcal H_1)$ is denoted by
$\mathcal C(\mathcal H_1)$. For  $A \in \mathcal C(\mathcal H_1,\mathcal H_2)$, 
its adjoint operator is denoted by $A^*\in \mathcal C(\mathcal H_2,\mathcal H_1)$.  
A self-adjoint operator $A$ on a Hilbert space $\mathcal H$ 
is said to be positive (strictly positive) if $(Ax,x)_{\mathcal H} \geq 0$ for all  
$x \in \mathcal D(A)$ ($(Ax,x)_{\mathcal H} > 0$); in such case we write $A\geq 0$ ($A> 0$).
The next theorem, due to Douglas \cite{Do}, is our central tool.

\begin{theorem}[Douglas' theorem \cite{Do}] 
\label{thm:DT}
Let $\mathcal H$ be a Hilbert space and $A,B \in \mathcal B(\mathcal H)$ 
be two bounded operators. The following statements are equivalent:
\begin{enumerate}
\item $\mathcal R(A)\subset \mathcal R(B)$;
\item $AA^* \leq \mu \ BB^*$ for some $\mu \geq 0$;
\item there exists a bounded operator $C\in \mathcal B(\mathcal H)$ such that $A=BC$.
\end{enumerate}
Moreover, if previous items 1, 2 and 3 hold, then there exists 
a unique operator $C\in \mathcal B(\mathcal H)$ such that
\begin{enumerate}
\item[(a)] $\|C\|^2= \inf \{ \mu \ | \ AA^* \leq \mu \  BB^* \}$;
\item[(b)] $\mathcal N(C) =\mathcal N(A)$;
\item[(c)] $\mathcal R(C) \subset \overline{\mathcal R(B^*)}$.
\end{enumerate}
\end{theorem}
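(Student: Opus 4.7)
The plan is to close the cycle of equivalences by proving the two trivial implications $(3) \Rightarrow (1)$ and $(3) \Rightarrow (2)$, then the closed-graph argument for $(1) \Rightarrow (3)$, and finally the main construction for $(2) \Rightarrow (3)$, which is the heart of the theorem. The additional properties (a)--(c) and the uniqueness statement will then be read off from the operator produced in $(2) \Rightarrow (3)$.

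For the easy directions, if $A = BC$ with $C \in \mathcal{B}(\mathcal{H})$ then $\mathcal{R}(A) \subset \mathcal{R}(B)$, and $AA^* = B\,CC^*B^* \leq \|C\|^2 \, BB^*$ since $CC^* \leq \|C\|^2 I$. For $(1) \Rightarrow (3)$, given $x \in \mathcal{H}$ there is a unique $y_x \in \mathcal{N}(B)^\perp = \overline{\mathcal{R}(B^*)}$ with $By_x = Ax$; I set $Cx := y_x$. Linearity is immediate and boundedness follows from the closed graph theorem applied to the continuous operators $A$ and $B$.

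The main obstacle is $(2) \Rightarrow (3)$. My plan is to construct $C$ as the adjoint of an operator $D$ defined first on $\mathcal{R}(B^*)$ by the prescription $D(B^*y) := A^*y$. The hypothesis $AA^* \leq \mu\, BB^*$ is precisely what is needed to make $D$ well-defined and bounded, since for any $y,z \in \mathcal{H}$
\[
\|A^*(y-z)\|^2 = (AA^*(y-z),y-z) \leq \mu\,(BB^*(y-z),y-z) = \mu\,\|B^*(y-z)\|^2,
\]
so $B^*y = B^*z$ forces $A^*y = A^*z$, and $\|D\|^2 \leq \mu$ on $\mathcal{R}(B^*)$. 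I then extend $D$ continuously to $\overline{\mathcal{R}(B^*)}$ and by zero to the orthogonal complement $\mathcal{N}(B)$, obtaining $D \in \mathcal{B}(\mathcal{H})$ with $DB^* = A^*$. Setting $C := D^*$ yields $BC = (DB^*)^* = A$, which finishes the construction.

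The additional properties follow from this construction. Property (c) holds because $D$ vanishes on $\mathcal{N}(B)$, so $\mathcal{R}(C) = \mathcal{R}(D^*) \subset \mathcal{N}(D)^\perp \subset \mathcal{N}(B)^\perp = \overline{\mathcal{R}(B^*)}$. For (b), the identity $A^*y = DB^*y$ together with the vanishing of $D$ on $\mathcal{N}(B)$ gives $\overline{\mathcal{R}(D)} = \overline{\mathcal{R}(A^*)}$, and passing to orthogonal complements yields $\mathcal{N}(C) = \mathcal{N}(A)$. Property (a) combines the upper bound $\|C\|^2 = \|D\|^2 \leq \mu$ obtained above with the converse inequality coming from $(3) \Rightarrow (2)$, namely that any $C$ realizing $A = BC$ forces $AA^* \leq \|C\|^2 BB^*$. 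Finally, uniqueness of $C$ follows from (c): if two candidates $C_1, C_2$ both satisfy $A = BC_i$ with $\mathcal{R}(C_i) \subset \overline{\mathcal{R}(B^*)}$, then $B(C_1 - C_2) = 0$ combined with $\mathcal{R}(C_1 - C_2) \subset \overline{\mathcal{R}(B^*)}$ forces $\mathcal{R}(C_1 - C_2) \subset \mathcal{N}(B) \cap \overline{\mathcal{R}(B^*)} = \{0\}$.
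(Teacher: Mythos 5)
The paper does not prove this theorem: it is quoted verbatim from Douglas \cite{Do} and used as a black box, so there is no in-paper argument to compare against. Your proof is correct and is essentially Douglas' original one: the trivial implications $(3)\Rightarrow(1)$ and $(3)\Rightarrow(2)$, the closed-graph argument for $(1)\Rightarrow(3)$, and for $(2)\Rightarrow(3)$ the densely defined operator $D(B^*y):=A^*y$, whose well-definedness and bound $\|D\|^2\leq\mu$ come exactly from the majorization $\|A^*y\|^2\leq\mu\|B^*y\|^2$, extended by continuity on $\overline{\mathcal R(B^*)}$ and by zero on $\mathcal N(B)$, with $C=D^*$. The verifications of (a), (b), (c) and the uniqueness argument via $\mathcal N(B)\cap\overline{\mathcal R(B^*)}=\{0\}$ are all sound; the only place worth one more line in a written version is the closed-graph step, where you should say explicitly that a limit $z$ of $Cx_n$ lies in the closed subspace $\mathcal N(B)^{\perp}$ and satisfies $Bz=Ax$, hence equals $Cx$ by uniqueness of the solution in $\mathcal N(B)^{\perp}$.
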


We make use of Theorem~\ref{thm:DT} of Douglas 
in the proof of our Lemma~\ref{lem:useDT},  
Proposition~\ref{prop:usesDT} 
and Theorems~\ref{prop2:usesDT} and \ref{cor:usesDT}.
	

\section{\textbf{New results about the Moore--Penrose inverse}}
\label{sec:03}

Let ${\mathcal H}_1$ and ${\mathcal H}_2$ be two Hilbert spaces, 
$A \in {\mathcal C}({\mathcal H}_1, {\mathcal H}_2)$ be a closed densely 
defined operator and $A^*$  its adjoint. The Moore--Penrose 
inverse of $A$, denoted by $A^\dagger$, is defined as the unique 
linear operator in  ${\mathcal C }({\mathcal H}_2, {\mathcal H}_1)$  
such that
$$
{\mathcal D}(A^\dagger)={\mathcal R}(A)
\oplus {\mathcal N}(A^*), 
\quad  \mathcal N(A^{\dagger })= \mathcal N(A^*) 
$$ 
and 
\begin{equation*}
\begin{cases}
AA^\dagger A=A, &\text{  }  \\
A^\dagger AA^\dagger=A^\dagger, &\text{}
\end{cases}
\qquad
\begin{cases}
AA^\dagger \subset P_{\overline{{\mathcal R}(A)}}, & \text{  }  \\
A^\dagger A \subset P_{\overline{{\mathcal R}( A^\dagger)}}, &\text{}
\end{cases}
\end{equation*}
where $P_{{\mathcal E}}$ denotes the orthogonal projection 
on the closed subspace ${\mathcal E}$. 
       
\begin{lemma}[See, e.g., \cite{MR0331659}]
\label{lemma:prel:01}
Let $A\in \mathcal B(\mathcal H_1,\mathcal H_2)$ be a bounded operator  
with closed range. Then, $A$ has a bounded Moore--Penrose inverse  
$A^{\dagger}\in \mathcal B(\mathcal H_2,\mathcal H_1)$.
\end{lemma}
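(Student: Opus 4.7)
The plan is to reduce the boundedness of $A^{\dagger}$ to the Banach isomorphism theorem, using the closed-range hypothesis in an essential way. First I would verify that $A^{\dagger}$ is everywhere defined on $\mathcal H_2$. Since $A\in\mathcal B(\mathcal H_1,\mathcal H_2)$ is in particular closed and densely defined, the standard identity $\overline{\mathcal R(A)}=\mathcal N(A^*)^{\perp}$ holds, and the hypothesis $\mathcal R(A)=\overline{\mathcal R(A)}$ then yields the orthogonal decomposition $\mathcal H_2=\mathcal R(A)\oplus\mathcal N(A^*)$. By the very definition of $A^{\dagger}$ recalled just above the lemma, this says $\mathcal D(A^{\dagger})=\mathcal H_2$.

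Next I would factor $A^{\dagger}$ through a genuine bijection. Consider the restriction
\[
\widetilde A := A\bigl|_{\mathcal N(A)^{\perp}} : \mathcal N(A)^{\perp} \longrightarrow \mathcal R(A).
\]
The subspace $\mathcal N(A)^{\perp}\subset\mathcal H_1$ is closed (so a Hilbert space), and by hypothesis $\mathcal R(A)\subset\mathcal H_2$ is closed (so also a Hilbert space). The map $\widetilde A$ is bounded, injective (kernel trivial by construction) and surjective (every element of $\mathcal R(A)$ is $Ax$ for some $x$, whose projection onto $\mathcal N(A)^{\perp}$ has the same image). Using the defining identities for $A^{\dagger}$, one checks that
\[
A^{\dagger} = \widetilde A^{-1}\circ P_{\mathcal R(A)},
\]
since $A^{\dagger}$ vanishes on $\mathcal N(A^*)$ and acts as $\widetilde A^{-1}$ on $\mathcal R(A)$ (this is what $AA^{\dagger}A=A$, $A^{\dagger}AA^{\dagger}=A^{\dagger}$ and the two projection relations force).

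The decisive step is the boundedness of $\widetilde A^{-1}$. Here I would invoke the Banach isomorphism theorem (equivalently, the Open Mapping Theorem): $\widetilde A$ is a bounded bijection between two Hilbert spaces, hence its inverse is automatically bounded. This is exactly where the closedness of $\mathcal R(A)$ is used in an irreplaceable way: without it, $\mathcal R(A)$ would only be a normed space, not Banach, and the open mapping theorem would fail to apply. Once $\widetilde A^{-1}\in\mathcal B(\mathcal R(A),\mathcal N(A)^{\perp})$ is in hand, boundedness of $A^{\dagger}=\widetilde A^{-1}P_{\mathcal R(A)}$ follows from $\|P_{\mathcal R(A)}\|\le 1$ and composition of bounded operators.

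The main obstacle is not conceptual but bookkeeping: one must justify carefully that the formula $A^{\dagger}=\widetilde A^{-1}P_{\mathcal R(A)}$ really matches the abstract definition recalled above (kernel condition, range condition, and the four algebraic identities). Once this identification is made, the result is immediate from the open mapping theorem, and no further estimates are needed.
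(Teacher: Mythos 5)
Your proposal is correct. Note that the paper does not actually prove this lemma --- it is quoted as a known result with a reference to Albert's book --- so there is no in-paper argument to compare against; what you have written is the standard proof that such a reference would supply. All the steps check out: the closed-range hypothesis gives $\mathcal H_2=\mathcal R(A)\oplus\mathcal N(A^*)=\mathcal D(A^\dagger)$; the restriction $\widetilde A=A|_{\mathcal N(A)^{\perp}}:\mathcal N(A)^{\perp}\to\mathcal R(A)$ is a bounded bijection between Hilbert spaces, so the open mapping theorem makes $\widetilde A^{-1}$ bounded; and the operator $\widetilde A^{-1}P_{\mathcal R(A)}$ satisfies the kernel condition $\mathcal N(\widetilde A^{-1}P_{\mathcal R(A)})=\mathcal R(A)^{\perp}=\mathcal N(A^*)$ together with the identities $A(\widetilde A^{-1}P_{\mathcal R(A)})=P_{\mathcal R(A)}$ and $(\widetilde A^{-1}P_{\mathcal R(A)})A=P_{\mathcal N(A)^{\perp}}$, from which the two ``inner'' Penrose identities follow, so by uniqueness it is $A^{\dagger}$. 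You correctly identify that closedness of $\mathcal R(A)$ is used exactly to make $\mathcal R(A)$ complete so that the open mapping theorem applies; the ``bookkeeping'' you flag is routine and poses no obstacle.
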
        

According to a fundamental result of von Neumann (see \cite{Gro}), 
for $A\in \mathcal C(\mathcal H_1,\mathcal H_2)$ the operators 
$(I+AA^*)^{-1}$ and $A^*(I+AA^*)^{-1}$ are everywhere defined and bounded. 
Moreover, $(I+AA^*)^{-1}$ is self-adjoint.
Also, the operators $(I+A^*A)^{-1}$ and $A(I+A^*A)^{-1} $ are everywhere 
defined and bounded, and $(I+A^*A)^{-1}$ is self-adjoint. 
Moreover, 
\begin{equation*}
(I+AA^*)^{-1} A \subset A(I+A^*A)^{-1}
\end{equation*}
and
\begin{equation*}
(I+A^*A)^{-1} A^*  \subset A^*(I+AA^*)^{-1}
\end{equation*}
(see \cite{Gro,L}). In the following, 
we state some useful identities due to Labrousse \cite{L}
and Labrousse and Mbekhta \cite{LM}. 

\begin{proposition}[See Lemma 2.5 and Corollary 2.6 of \cite{L}]  
\label{prop:3.1}
Let $A \in \mathcal C(\mathcal H_1, \mathcal H_2)$ 
and $B \in \mathcal C(\mathcal H_2,\mathcal H_1)$ be
such that $ B=A^{\dagger}$. Then,
\begin{enumerate}
\item $A(I+A^*A)^{-1} = B^*(I+BB^*)^{-1}$;
\item $(I+A^*A)^{-1}+(I+BB^*)^{-1}= I+P_{\mathcal N(B^*)}$;     
\item $A^*(I+AA^*)^{-1} = B(I+B^*B)^{-1}$;
\item $(I+AA^*)^{-1}+(I+B^*B)^{-1}= I+P_{\mathcal N(A^*)}$;
\item $(I+AA^*)^{-1}+(I+B^*B)^{-1}= I$  (if $A^*$ is injective);
\item $\mathcal N(A^*(I+AA^*)^{-1/2}) =\mathcal N(A^*) = \mathcal N(B)$.
\end{enumerate}
\end{proposition}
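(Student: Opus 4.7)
The six identities all rest on the defining relations of $A^\dagger$ together with the von Neumann commutation relations recalled just before the proposition. My plan is the following.

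For identity (1), I would apply the von Neumann identities to both $A$ and $B$ and reduce the claim to an equality on a dense subspace. Concretely, since $B=A^\dagger$, the operator $A$ acts on $\overline{\mathcal R(A^*)}$ as a bounded inverse of $B^*=(A^*)^\dagger$ on $\overline{\mathcal R(B^*)}$; the commutation $(I+AA^*)^{-1}A\subset A(I+A^*A)^{-1}$, together with the analogous inclusion for $B$, transports this intertwining through the resolvents, and continuity extends the resulting equality to all of $\mathcal H_1$. Identity (3) then follows from (1) by replacing $A$ with $A^*$, noting that $(A^*)^\dagger=B^*$.

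For identities (2) and (4), I would use the elementary identity $A^*A(I+A^*A)^{-1}=I-(I+A^*A)^{-1}$ (valid on all of $\mathcal H_1$) and its counterpart $BB^*(I+BB^*)^{-1}=I-(I+BB^*)^{-1}$. Together with (1), these let me rewrite each summand in $(I+A^*A)^{-1}+(I+BB^*)^{-1}$ in terms of cross-compositions involving $A^*B^*$ and $BA$. Invoking the projection identities $AA^\dagger\subset P_{\overline{\mathcal R(A)}}$ and $A^\dagger A\subset P_{\overline{\mathcal R(A^\dagger)}}$, together with the orthogonal decomposition $I=P_{\overline{\mathcal R(B)}}+P_{\mathcal N(B^*)}$, the sum collapses to $I+P_{\mathcal N(B^*)}$, giving (2); identity (4) is the symmetric statement. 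Identity (5) is the immediate specialization of (4) in which $P_{\mathcal N(A^*)}$ vanishes.

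Identity (6) is essentially a spectral calculation. By the functional calculus for the self-adjoint operator $AA^*$, the bounded operator $(I+AA^*)^{-1/2}$ is positive, injective, and acts as the identity on $\mathcal N(AA^*)=\mathcal N(A^*)$. Hence $A^*(I+AA^*)^{-1/2}x=0$ forces $(I+AA^*)^{-1/2}x\in\mathcal N(A^*)$, and applying $(I+AA^*)^{1/2}$ on this subspace (where it reduces to the identity) yields $x\in\mathcal N(A^*)$; the reverse inclusion is immediate. The equality $\mathcal N(A^*)=\mathcal N(B)$ is part of the definition of the Moore--Penrose inverse. The main obstacle I expect is identity (1): the operators involved are unbounded with different natural domains, so care is needed to ensure each manipulation is legitimate before invoking density to pass to the final everywhere-defined bounded identity. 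The remaining items are then largely algebraic or spectral consequences of (1) and (3).
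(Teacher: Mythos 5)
The paper offers no proof of Proposition~\ref{prop:3.1} to compare yours against: the result is imported verbatim from Labrousse \cite{L} (Lemma~2.5 and Corollary~2.6 there), so your sketch can only be judged on its own terms. Much of it is sound in outline. Deducing (3) from (1) by the substitution $A\mapsto A^*$, $(A^*)^{\dagger}=B^*$ is legitimate; (5) is indeed the special case of (4) in which $P_{\mathcal N(A^*)}$ vanishes; and your argument for (6) is complete and correct, since $\mathcal N(AA^*)=\mathcal N(A^*)$ is a reducing subspace on which every bounded function of $AA^*$ acts as the scalar $f(0)$, and $\mathcal N(B)=\mathcal N(A^*)$ is part of the definition of $A^{\dagger}$.

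The genuine gap is in item (1), which you rightly single out as the crux but whose mechanism you state incorrectly: you write that ``$A$ acts on $\overline{\mathcal R(A^*)}$ as a bounded inverse of $B^*$,'' yet $A$ and $B^*$ both map $\mathcal H_1$ into $\mathcal H_2$, so neither can invert the other, and $A$ need not be bounded. The inversion relations actually available are that $B$ inverts $A$ between $\overline{\mathcal R(A^*)}=\overline{\mathcal R(B)}$ and $\overline{\mathcal R(A)}$ (equivalently, $A^*=(B^*)^{\dagger}$ inverts $B^*$), and the subsequent appeal to ``transporting the intertwining through the resolvents'' is a statement of intent rather than an argument. A clean repair --- which also closes (2) and (4), whose ``collapse to $I+P_{\mathcal N(B^*)}$'' you assert without derivation and which faces domain problems for the unbounded products $A^*B^*$ and $BA$ --- goes through the polar decomposition $A=U|A|$: one checks $B^*=U\,|A|^{\dagger}$ with the same partial isometry $U$, hence $BB^*=(|A|^{\dagger})^2$, and both sides of (1) equal $U\,f(|A|)$ because $f(t)=t/(1+t^2)$ is invariant under $t\mapsto 1/t$ on $(0,\infty)$ and vanishes at $t=0$; likewise (2) and (4) reduce to $(1+t^2)^{-1}+(1+t^{-2})^{-1}=1$ for $t>0$ together with the value $1+1$ on the kernel. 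Without something of this sort the central identity, and everything you derive from it, remains unproved.
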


\begin{proposition}[See Proposition~1.7 of \cite{LM}] 
\label{prop:3.2}
Let  ${\mathcal H}_1, {\mathcal H}_2$ be two Hilbert spaces, 
$A \in {\mathcal B}({\mathcal H}_1, {\mathcal H}_2)$ 
and $B$ its Moore--Penrose inverse. Then,
one has 
\begin{equation*}
\|x\|_{\mathcal H_1}^2= \|B^*(I+BB^*)^{-1/2} 
x\|_{\mathcal H_2}^2 + \|(I+BB^*)^{-1/2} x\|_{\mathcal H_1}^2
\end{equation*}
for all $x\in \mathcal H_1$.
\end{proposition}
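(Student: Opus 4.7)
The plan is to recognize the identity as the operator-theoretic incarnation of the elementary scalar relation $\frac{\lambda}{1+\lambda}+\frac{1}{1+\lambda}=1$, lifted to the positive self-adjoint operator $BB^{*}$ on $\mathcal{H}_1$ via the Borel functional calculus. In other words, I would rewrite each of the two summands on the right-hand side as a quadratic form in $x$ built from a bounded function of $BB^{*}$, and then conclude by a one-line functional identity.

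First I would fix the framework. Since $A$ is bounded, its Moore--Penrose inverse $B=A^{\dagger}$ is in general only closed and densely defined from $\mathcal{H}_2$ into $\mathcal{H}_1$, and consequently $B^{*}\in\mathcal{C}(\mathcal{H}_1,\mathcal{H}_2)$ is closed densely defined. By the von Neumann result recalled just before the statement, $(I+BB^{*})^{-1}$ is everywhere defined, bounded and self-adjoint; its square root $(I+BB^{*})^{-1/2}$ is then supplied by the Borel functional calculus of $BB^{*}$ and inherits the same properties, in particular self-adjointness.

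The only genuine subtlety is the domain on which $B^{*}(I+BB^{*})^{-1/2}$ acts, and I would handle it as follows. Spectral calculus gives $\mathcal{R}\bigl((I+BB^{*})^{-1/2}\bigr)\subset \mathcal{D}\bigl((I+BB^{*})^{1/2}\bigr)=\mathcal{D}\bigl((BB^{*})^{1/2}\bigr)=\mathcal{D}(|B^{*}|)=\mathcal{D}(B^{*})$, so the composition $B^{*}(I+BB^{*})^{-1/2}$ is defined on all of $\mathcal{H}_1$ and bounded. Using the polar decomposition $B^{*}=U|B^{*}|$, where $U$ is the partial isometry with initial space $\overline{\mathcal{R}(|B^{*}|)}$, one has $\|B^{*}y\|_{\mathcal{H}_2}=\||B^{*}|y\|_{\mathcal{H}_1}$ for every $y\in\mathcal{D}(B^{*})$, which together with the fact that $(BB^{*})^{1/2}$ and $(I+BB^{*})^{-1/2}$ commute (both being Borel functions of $BB^{*}$) yields
\begin{equation*}
\|B^{*}(I+BB^{*})^{-1/2}x\|_{\mathcal{H}_2}^{2}
=\bigl(BB^{*}(I+BB^{*})^{-1}x,\,x\bigr)_{\mathcal{H}_1}.
\end{equation*}

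The conclusion is then immediate. Combining with $\|(I+BB^{*})^{-1/2}x\|_{\mathcal{H}_1}^{2}=\bigl((I+BB^{*})^{-1}x,x\bigr)_{\mathcal{H}_1}$, which follows from the self-adjointness of $(I+BB^{*})^{-1/2}$, and with the bounded operator identity $BB^{*}(I+BB^{*})^{-1}+(I+BB^{*})^{-1}=I$ (the spectral-theorem lift of the scalar relation), the two terms add up to $(x,x)_{\mathcal{H}_1}=\|x\|_{\mathcal{H}_1}^{2}$. I expect that the main obstacle is purely notational, namely ensuring every operator manipulation is legal despite the possible unboundedness of $B$ and $B^{*}$; this is precisely what is resolved by the range inclusion $\mathcal{R}((I+BB^{*})^{-1/2})\subset\mathcal{D}(B^{*})$ established above.
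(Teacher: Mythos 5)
Your proof is correct: the domain issue is the only real point of care, and you handle it properly by noting $\mathcal{R}\bigl((I+BB^{*})^{-1/2}\bigr)=\mathcal{D}\bigl((I+BB^{*})^{1/2}\bigr)=\mathcal{D}(B^{*})$ before reducing everything, via the polar decomposition and the functional calculus of $BB^{*}$, to the scalar identity $\tfrac{\lambda}{1+\lambda}+\tfrac{1}{1+\lambda}=1$. The paper itself states this proposition without proof, importing it from Proposition~1.7 of Labrousse--Mbekhta, so there is no in-paper argument to compare against; your argument is a sound, self-contained justification of the quoted result.
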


For extensive results and applications concerning the Moore--Penrose 
inverse concept, we refer the reader to \cite{MR3824748,Gro,L,LM}
and references therein. The following results are, 
to the best of our knowledge, new.

\begin{proposition} 
\label{OR:01}
Let ${\mathcal H}_1$ and ${\mathcal H}_2$ be two Hilbert spaces, 
$A \in {\mathcal B}({\mathcal H}_1, {\mathcal H}_2)$ 
and $B$ be its Moore--Penrose inverse. If $x\in \mathcal R(B)$, then
\begin{equation*}
\|x\|_{\mathcal H_1}^2 = \|(I+BB^*)^{-1/2} x\|_{\mathcal H_1}^2 
+ \|(I+A^*A)^{-1/2} x\|_{\mathcal H_1}^2.
\end{equation*}
\end{proposition}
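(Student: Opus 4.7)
The plan is to derive the identity by applying item (2) of Proposition~\ref{prop:3.1} to the vector $x$ and then taking the inner product with $x$. That identity gives
\begin{equation*}
(I+A^*A)^{-1}+(I+BB^*)^{-1}= I+P_{\mathcal N(B^*)},
\end{equation*}
so the task reduces to showing that the ``correction term'' $P_{\mathcal N(B^*)}x$ vanishes whenever $x\in\mathcal R(B)$. Once that vanishing is in hand, one has the pointwise identity $(I+A^*A)^{-1}x+(I+BB^*)^{-1}x=x$ in $\mathcal H_1$.

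To kill $P_{\mathcal N(B^*)}x$, I would invoke the standard orthogonality relation for a closed densely defined operator, namely $\overline{\mathcal R(B)}=\mathcal N(B^*)^{\perp}$. Since $B=A^{\dagger}$ is closed and densely defined, this orthogonal-complement formula is available, and the hypothesis $x\in\mathcal R(B)\subset\mathcal N(B^*)^{\perp}$ immediately gives $P_{\mathcal N(B^*)}x=0$. (Equivalently, one can recognize $\mathcal N(B^*)=\mathcal N(A)$ via $B^*=(A^*)^{\dagger}$ and use the defining inclusion $\mathcal R(A^{\dagger})\subset\mathcal N(A)^{\perp}$.)

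To finish, I take the inner product of $(I+A^*A)^{-1}x+(I+BB^*)^{-1}x=x$ with $x$. By the von~Neumann theorem mentioned before Proposition~\ref{prop:3.1}, both $(I+A^*A)^{-1}$ and $(I+BB^*)^{-1}$ are everywhere defined, bounded, positive and self-adjoint, so the functional calculus produces bounded positive self-adjoint square roots and yields
\begin{equation*}
\bigl((I+A^*A)^{-1}x,x\bigr)_{\mathcal H_1}
=\|(I+A^*A)^{-1/2}x\|_{\mathcal H_1}^{2},
\qquad
\bigl((I+BB^*)^{-1}x,x\bigr)_{\mathcal H_1}
=\|(I+BB^*)^{-1/2}x\|_{\mathcal H_1}^{2}.
\end{equation*}
Summing these and using $(x,x)_{\mathcal H_1}=\|x\|_{\mathcal H_1}^{2}$ gives the claimed equality. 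The only nontrivial step is verifying $P_{\mathcal N(B^*)}x=0$, which is immediate from the orthogonality relation above; everything else is mechanical rewriting of quadratic forms as squared norms, once Proposition~\ref{prop:3.1}(2) is invoked.
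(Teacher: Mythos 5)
Your proposal is correct and follows essentially the same route as the paper: invoke Proposition~\ref{prop:3.1}(2), observe that $P_{\mathcal N(B^*)}x=0$ because $x\in\mathcal R(B)\subset\mathcal N(B^*)^{\perp}$, and then pair the resulting identity with $x$ to convert the positive quadratic forms into squared norms of the self-adjoint square roots. Your justification of the orthogonality step is in fact slightly more careful than the paper's, which simply writes $\mathcal R(B)=\mathcal N(B^*)^{\perp}$.
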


\begin{proof}
For $x\in \mathcal R(B)=\mathcal N(B^*)^{\perp}$, where $\mathcal N(B^*)^{\perp}$ 
denotes the orthogonal complement of $\mathcal N(B^*)$, 
$$ 
(I+A^*A)^{-1}x+ (I+BB^*)^{-1}x = x
$$
according to the second item of Proposition~\ref{prop:3.1},
which implies that
\begin{equation*}
\begin{split}
\|x\|_{\mathcal H_1}^2 
&= (x,x)_{\mathcal H_1}
=\left((I+A^*A)^{-1}x+(I+BB^*)^{-1}x,x\right)_{\mathcal H_1}\\
&= \|(I+A^*A)^{-1/2}x\|_{\mathcal H_1}^2 
+ \|(I+BB^*)^{-1/2}x \|_{\mathcal H_1}^2.
\end{split}
\end{equation*}   
The proof is complete.
\end{proof} 

We will extensively make use of the following result:

\begin{theorem}   
\label{OR:02}
Let  ${\mathcal H}_1$ and ${\mathcal H}_2$ be two Hilbert spaces, 
$A \in {\mathcal B}({\mathcal H}_1, {\mathcal H}_2)$ and 
$B$ be its Moore--Penrose inverse. Then, the operator $B^*(I+BB^*)^{-1/2}$ 
is bounded with closed range and has a bounded Moore--Penrose inverse given by 
\begin{equation*}
T_B=B(I+B^*B)^{-1/2}+A^*(I+B^*B)^{-1/2}.
\end{equation*}
Moreover, the adjoint operator of $T_B$ is $T_{B^*}$, where 
\begin{equation*}
T_{B^*}=B^*(I+BB^*)^{-1/2}+A(I+BB^*)^{-1/2}.
\end{equation*}
\end{theorem}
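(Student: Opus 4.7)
\emph{Proof plan.} The strategy is to verify directly that $T_B$ satisfies the four Moore--Penrose axioms with respect to $S:=B^*(I+BB^*)^{-1/2}$; closedness of $\mathcal R(S)$ then follows automatically from the existence of a bounded generalized inverse (Lemma~\ref{lemma:prel:01}), and the adjoint relation $T_B^*=T_{B^*}$ is obtained by taking adjoints termwise combined with one intertwining identity.

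\emph{Boundedness.} The operator $B=A^\dagger$ lies in $\mathcal C(\mathcal H_2,\mathcal H_1)$ and is therefore closed and densely defined. By the von Neumann type facts recalled in the paragraph preceding Proposition~\ref{prop:3.1}, both $B^*(I+BB^*)^{-1/2}$ (apply with $T=B^*$) and $B(I+B^*B)^{-1/2}$ (apply with $T=B$) are everywhere defined and bounded, with operator norm at most one. Combined with the boundedness of $A$, this shows that $S$ and $T_B$ are bounded.

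\emph{Moore--Penrose axioms.} Expanding $ST_B$ and using the intertwining $(I+BB^*)^{-1/2}B=B(I+B^*B)^{-1/2}$ (provable by the spectral theorem applied to the positive self-adjoint operator $BB^*$), the first cross term collapses to $B^*B(I+B^*B)^{-1}=I-(I+B^*B)^{-1}$. The second cross term $S\,A^*(I+B^*B)^{-1/2}$ is reduced, via Proposition~\ref{prop:3.1}(1) together with the half-power analogue of that intertwining for $A$ and $AA^*$, to $AA^*(I+AA^*)^{-1}=I-(I+AA^*)^{-1}$. Summing and invoking Proposition~\ref{prop:3.1}(4), $(I+AA^*)^{-1}+(I+B^*B)^{-1}=I+P_{\mathcal N(A^*)}$, gives $ST_B=I-P_{\mathcal N(A^*)}=P_{\overline{\mathcal R(B^*)}}$, which is self-adjoint. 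A parallel computation in $\mathcal H_1$, using Proposition~\ref{prop:3.1}(2) in place of (4), yields $T_BS=I-P_{\mathcal N(A)}=P_{\overline{\mathcal R(B)}}$, also self-adjoint. Since each summand of $S$ has range contained in $\overline{\mathcal R(B^*)}$ and each summand of $T_B$ has range contained in $\mathcal R(B)+\mathcal R(A^*)\subset\overline{\mathcal R(B)}$, the identities $ST_BS=S$ and $T_BST_B=T_B$ follow at once. Hence $T_B=S^\dagger$ and $S$ has closed range.

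\emph{Adjoint and main obstacle.} Taking adjoints termwise,
\[
T_B^* = \bigl[B(I+B^*B)^{-1/2}\bigr]^* + \bigl[A^*(I+B^*B)^{-1/2}\bigr]^* = B^*(I+BB^*)^{-1/2} + (I+B^*B)^{-1/2}A,
\]
where the first summand is rewritten by the same intertwining used above. The remaining identity needed is $(I+B^*B)^{-1/2}A = A(I+BB^*)^{-1/2}$, which then presents the right-hand side as $T_{B^*}$. This last intertwining is the main obstacle, since $A$ and the resolvents of $BB^*$ and $B^*B$ act on different spaces and do not commute in any obvious algebraic sense. The cleanest justification is through the spectral theorem applied to the bounded positive operators $A^*A$ and $AA^*$: Proposition~\ref{prop:3.1} furnishes the relation between $(I+B^*B)^{-1/2}$ and $(I+AA^*)^{-1/2}$ (and similarly between $(I+BB^*)^{-1/2}$ and $(I+A^*A)^{-1/2}$), reducing the claim to the standard intertwining $f(AA^*)A=Af(A^*A)$. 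Equivalently, the polar decomposition $A=U|A|$ yields the compact forms $S=U(I+A^*A)^{-1/2}$ and $T_B=(I+A^*A)^{1/2}U^*$, from which the intertwining, the four Moore--Penrose axioms, and the adjoint identity are all transparent via $UU^*U=U$ and the self-adjointness of $(I+A^*A)^{1/2}$.
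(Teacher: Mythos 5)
Your proposal is correct in substance, and its computational core --- showing that $ST_B$ and $T_BS$ are the orthogonal projections onto $\overline{\mathcal R(B^*)}$ and $\overline{\mathcal R(B)}$ by expanding the products, using the intertwining relations and items of Proposition~\ref{prop:3.1} --- is the same calculation the paper performs. Where you genuinely diverge is in the logical order: the paper first proves closedness of $\mathcal R(B^*(I+BB^*)^{-1/2})$ directly, via the coercivity estimate $c\,\|x\|_{\mathcal H_1}\leq \|(I+A^*A)^{-1/2}x\|_{\mathcal H_1}=\|B^*(I+BB^*)^{-1/2}x\|_{\mathcal H_2}$ on $\mathcal R(B)$ obtained from Proposition~\ref{OR:01} and Proposition~\ref{prop:3.2}, and only then invokes Lemma~\ref{lemma:prel:01}; you instead verify the Penrose equations first and read off closedness from $\mathcal R(S)=\mathcal R(ST_BS)\subset\mathcal R(ST_B)=\overline{\mathcal R(B^*)}\subset\overline{\mathcal R(S)}$, a legitimate shortcut that makes Proposition~\ref{OR:01} unnecessary for this step. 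For the adjoint, the paper simply quotes the termwise formulas $(B(I+B^*B)^{-1/2})^*=B^*(I+BB^*)^{-1/2}$ and $(A^*(I+B^*B)^{-1/2})^*=A(I+BB^*)^{-1/2}$ from \cite{L}, whereas you correctly isolate the real content as the intertwining $(I+B^*B)^{-1/2}A=A(I+BB^*)^{-1/2}$; and your closing observation that $S=U(I+A^*A)^{-1/2}$ and $T_B=(I+A^*A)^{1/2}U^*$ in the polar decomposition $A=U|A|$ is the cleanest way to see all the assertions at once and is genuinely different from (and slicker than) the paper's route. One caution: the phrase ``half-power analogue of Proposition~\ref{prop:3.1}(1)'' is dangerous if it is read as $A(I+A^*A)^{-1/2}=B^*(I+BB^*)^{-1/2}$; that identity is \emph{false} (the symbols are $t(1+t^2)^{-1/2}$ versus $(1+t^2)^{-1/2}$), the correct half-power statements being $B^*(I+BB^*)^{-1/2}=U(I+A^*A)^{-1/2}$ and $(I+AA^*)^{-1/2}A=A(I+A^*A)^{-1/2}$. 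Your stated value $AA^*(I+AA^*)^{-1}$ for the second cross term is nonetheless correct, as the polar-decomposition computation confirms, so this is a flaw in the signposting rather than in the result.
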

    
\begin{proof}
For $x\in \mathcal H_1$ we have, 
according to Proposition~\ref{prop:3.2}, that
\begin{equation*}
\|x\|_{\mathcal H_1}^2= \|B^*(I+BB^*)^{-1/2} x\|_{\mathcal H_2}^2
+ \|(I+BB^*)^{-1/2} x\|_{\mathcal H_1}^2,
\end{equation*}
which implies that 
$$
\|B^*(I+BB^*)^{-1/2} x\|_{\mathcal H_2} \leq \|x\|_{\mathcal H_1}.
$$
Therefore, the operator $B^*(I+BB^*)^{-1/2}$ is bounded.  
To establish that it has a closed range, it suffices to prove 
the existence of a constant $c>0$ such that if $x\in \mathcal N 
\left(B^*(I+BB^*)^{-1/2}\right)^{\perp} = \mathcal N(B^*)^{\perp} 
= \mathcal R(B)$ (item 6 of Proposition~\ref{prop:3.1}), then 
$$ 
c \|x\|_{\mathcal H_1} \leq \|B^*(I+BB^*)^{-1/2} x \|_{\mathcal H_2}.
$$
In fact, for $x\in \mathcal R(B)$, we have according 
to our Proposition~\ref{OR:01} that
$$
\|x\|_{\mathcal H_1}^2 = \|(I+BB^*)^{-1/2} x\|_{\mathcal H_1}^2 
+ \|(I+A^*A)^{-1/2} x\|_{\mathcal H_1}^2.
$$
Combining the last equality with the one given 
by Proposition~\ref{prop:3.2}, we obtain that  
$$
\|B^*(I+BB^*)^{-1/2} x\|_{\mathcal H_2} 
= \|(I+A^*A)^{-1/2} x\|_{\mathcal H_1}
$$
for all $x\in \mathcal R(B)$. Because $I+A^*A$ 
and its inverse are positive and bounded, 
it follows that their associated square roots  
are positive and bounded as well. So, $(I+A^*A)^{-1/2}$ is positive, 
bounded, and has a bounded inverse. This implies the existence of a 
positive constant $c$ such that 
$$
c \ \|x\|_{\mathcal H_1} \leq \|(I+A^*A)^{-1/2}x \|_{\mathcal H_1}
$$ 
for all $x\in \mathcal H_1$. In particular, 
for $x\in \mathcal R(B)$, we have
$$
c \ \|x\|_{\mathcal H_1} \leq  \|(I+A^*A)^{-1/2}x \|_{\mathcal H_1}
=\|B^*(I+BB^*)^{-1/2}x \|_{\mathcal H_2}.
$$ 
Therefore, we deduce that $\mathcal R(B^*(I+BB^*)^{-1/2})$ is closed. 
Consequently, according to Lemma~\ref{lemma:prel:01}, 
$B^*(I+BB^*)^{-1/2}$ has a bounded Moore--Penrose inverse.
On the other hand, we verify that
\begin{multline*}
T_B B^*(I+BB^*)^{-1/2}
= \left(B(I+B^*B)^{-1/2}+A^*(I+B^*B)^{-1/2}\right) B^*(I+BB^*)^{-1/2}\\
=B(I+BB^*)^{-1/2}B^*(I+BB^*)^{-1/2}+ A^* (I+B^*B)^{-1/2}B^*(I+BB^*)^{-1/2}. 
\end{multline*}
Since  
\begin{gather*}
(I+B^*B)^{-1/2} B^*  \subset B^*(I+BB^*)^{-1/2},\\
B^*(I+BB^*)^{-1}=A(I+A^*A)^{-1}
\end{gather*}
and $A^* B^* \subset BA  = P_{\mathcal R( B)}$,
we obtain that
\begin{equation*}
\begin{split}
T_B B^*(I+BB^*)^{-1/2}  
&= BB^*(I+BB^*)^{-1}+ A^* B^*(I+BB^*)^{-1}\\
&= BA(I+A^*A)^{-1})+A^* B^*(I+BB^*)^{-1}\\
&= BA \big( (I+A^*A)^{-1} +(I+BB^*)^{-1} \big )\\
&= BA (I+P_{\mathcal N(A)})\\ 
&= BA = P_{\mathcal R( B)}.
\end{split}
\end{equation*}
Similarly, one can prove that
$$ 
B^*(I+BB^*)^{-1/2}T_B = P_{\mathcal R(B^*)}.
$$
This implies that
$$
T_{B} B^*(I+BB^*)^{-1/2} T_{B} = T_{B}
$$
and 
$$ 
B^*(I+BB^*)^{-1/2} T_{B}B^*(I+BB^*)^{-1/2} = B^*(I+BB^*)^{-1/2}.
$$
Therefore, $T_B$ is the Moore--Penrose inverse of $B^*(I+BB^*)^{-1/2}$.  
Moreover, since 
$$
(B(I+B^*B)^{-1/2})^* =B^*(I+BB^*)^{-1/2}
$$ 
and  
$$
(A^*(I+B^*B)^{-1/2})^*=A(I+BB^*)^{-1/2}
$$ 
(see \cite{L}), we obtain that
\begin{equation*}
\begin{split}
T_{B}^*
&=(B(I+B^*B)^{-1/2})^*+(A^*(I+B^*B)^{-1/2})^*\\
&= B^*(I+BB^*)^{-1/2}+A(I+BB^*)^{-1/2}\\
&=T_{B^*}.
\end{split}
\end{equation*}
The result is proved: $T_B^*=T_{B^*}$.
\end{proof}
    
From Propositions~\ref{prop:3.1}, \ref{prop:3.2} and \ref{OR:01}, 
one can easily deduce the two following corollaries.
    
\begin{corollary} 
\label{cor:3.6}
Let $A\in\mathcal B(\mathcal H_1,\mathcal H_2)$ 
and $B$ be its Moore--Penrose inverse. Then,
$B^*(I+BB^*)^{-1/2}$ is an isomorphism from 
$\mathcal N(B^*)^{\perp}$ to $\mathcal R(B^*)$. 
\end{corollary}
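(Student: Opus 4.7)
The plan is to read the corollary off directly from Theorem~\ref{OR:02} combined with item 6 of Proposition~\ref{prop:3.1}. Setting $S := B^*(I+BB^*)^{-1/2}$, Theorem~\ref{OR:02} tells me that $S$ is bounded with closed range, while item 6 of Proposition~\ref{prop:3.1} tells me that $\mathcal N(S) = \mathcal N(B^*)$. Hence $S$ restricted to $\mathcal N(B^*)^{\perp}$ is a bounded injective operator with closed range, and the open mapping theorem upgrades this restriction to a topological isomorphism onto $\mathcal R(S)$.

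What remains is the identification $\mathcal R(S) = \mathcal R(B^*)$. The inclusion $\mathcal R(S) \subset \mathcal R(B^*)$ is automatic from the factorization of $S$ through $B^*$. For the reverse inclusion, given $z = B^*y$ with $y \in \mathcal D(B^*)$, I would take $x := (I+BB^*)^{1/2} y$, which lies in $\mathcal H_1$ by the spectral-theoretic identity $\mathcal D((I+BB^*)^{1/2}) = \mathcal D(B^*)$; then $Sx = B^*(I+BB^*)^{-1/2}(I+BB^*)^{1/2}y = B^*y = z$. Alternatively, one may invoke the identity $S\,T_B = P_{\mathcal R(B^*)}$ already established inside the proof of Theorem~\ref{OR:02} (noting that $\mathcal R(B^*)$ is closed, since $\mathcal R(B) = \mathcal N(A)^{\perp}$ for $B = A^{\dagger}$ with $A$ bounded, hence $\mathcal R(B^*)$ is closed as well).

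The main obstacle, such as there is one, is the careful handling of the possibly unbounded operator $B^* = (A^{\dagger})^*$: one must verify that $\mathcal R((I+BB^*)^{-1/2}) = \mathcal D(B^*)$, so that $S$ is both everywhere defined and genuinely surjective onto $\mathcal R(B^*)$. This is a standard consequence of the spectral theorem applied to the positive self-adjoint operator $BB^*$ (via the classical identity $\mathcal D((T^*T)^{1/2}) = \mathcal D(T)$ with $T=B^*$), and once it is in hand the corollary is immediate.
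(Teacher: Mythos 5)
Your proof is correct and takes essentially the route the paper intends: the corollary is stated there without proof as an easy consequence of Propositions~\ref{prop:3.1}, \ref{prop:3.2} and \ref{OR:01} (equivalently, of Theorem~\ref{OR:02}), which is exactly the chain you use for boundedness, the null-space identification $\mathcal N\bigl(B^*(I+BB^*)^{-1/2}\bigr)=\mathcal N(B^*)$, and the closed range. The one point the paper leaves implicit --- surjectivity onto $\mathcal R(B^*)$ --- you settle correctly, either via the domain identity $\mathcal D\bigl((I+BB^*)^{1/2}\bigr)=\mathcal D(B^*)$ or via the identity $B^*(I+BB^*)^{-1/2}T_B=P_{\mathcal R(B^*)}$ already established inside the proof of Theorem~\ref{OR:02}.
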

    
\begin{corollary}
Let $A\in\mathcal B(\mathcal H_1,\mathcal H_2)$ and $B$ be 
its Moore--Penrose inverse. Then, $T_B$ is an isomorphism from 
$\mathcal R(B^*)$ to $\mathcal N(B^*)^{\perp}$.  
\end{corollary}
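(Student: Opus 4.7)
The plan is to read off this corollary from Theorem~\ref{OR:02} combined with Corollary~\ref{cor:3.6}, essentially by identifying $T_B$ restricted to $\mathcal R(B^*)$ with the inverse of the isomorphism established in Corollary~\ref{cor:3.6}. Set $S := B^*(I+BB^*)^{-1/2}$. By Theorem~\ref{OR:02}, $T_B$ is the Moore--Penrose inverse of $S$ and is bounded (since $S$ has closed range, Lemma~\ref{lemma:prel:01} applies). By Corollary~\ref{cor:3.6}, $S$ is an isomorphism from $\mathcal N(B^*)^{\perp}$ onto $\mathcal R(B^*)$, so $\mathcal R(S)=\mathcal R(B^*)$ is closed and $\mathcal N(S)=\mathcal N(B^*)$.

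Next I would invoke the two identities obtained inside the proof of Theorem~\ref{OR:02}, namely
$$
S\,T_B = P_{\mathcal R(B^*)}, \qquad T_B\,S = P_{\mathcal R(B)},
$$
together with the standing equality $\overline{\mathcal R(B)}=\mathcal N(B^*)^{\perp}$. Given any $y\in \mathcal R(B^*)$, Corollary~\ref{cor:3.6} furnishes a unique $x\in\mathcal N(B^*)^{\perp}$ with $y=Sx$; then $T_B y = T_B Sx = x \in \mathcal N(B^*)^{\perp}$, which simultaneously shows that $T_B$ sends $\mathcal R(B^*)$ into $\mathcal N(B^*)^{\perp}$ and that it is surjective onto that subspace. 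Injectivity on $\mathcal R(B^*)$ follows from $S T_B y = y$ for $y\in\mathcal R(B^*)$. Thus $T_B|_{\mathcal R(B^*)}$ is a set-theoretic inverse of the isomorphism $S|_{\mathcal N(B^*)^{\perp}}$.

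It then remains to upgrade this bijection to a topological isomorphism, which is immediate: $T_B$ is bounded on all of $\mathcal H_2$ by Theorem~\ref{OR:02}, and its inverse on $\mathcal R(B^*)$ is the restriction of the bounded operator $S$ to $\mathcal N(B^*)^{\perp}$. The only subtle point, and the one I expect to pay attention to, is the book-keeping between $\mathcal R(B)$ and its closure $\mathcal N(B^*)^{\perp}$: although $\mathcal R(B)$ need not be closed in general, the image $T_B Sx$ coincides with $x$ whenever $x\in\mathcal N(B^*)^{\perp}$, because the Moore--Penrose identities force $T_B S$ to act as the identity on $\mathcal N(B^*)^{\perp}$. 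Once this is observed, nothing more is needed and the corollary follows in a handful of lines.
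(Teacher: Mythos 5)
Your proof is correct and follows exactly the route the paper intends (the paper states this corollary without proof, as an easy deduction from the surrounding results): you identify $T_B$ restricted to $\mathcal R(B^*)$ as the two-sided inverse of the isomorphism $B^*(I+BB^*)^{-1/2}\colon \mathcal N(B^*)^{\perp}\to\mathcal R(B^*)$ of Corollary~\ref{cor:3.6}, using the projection identities $T_B S=P_{\mathcal N(B^*)^{\perp}}$ and $S T_B=P_{\mathcal R(B^*)}$ from Theorem~\ref{OR:02}, with boundedness of both directions supplied by Theorem~\ref{OR:02} and Lemma~\ref{lemma:prel:01}. Your explicit attention to the distinction between $\mathcal R(B)$ and $\overline{\mathcal R(B)}=\mathcal N(B^*)^{\perp}$ is a point the paper glosses over, and it is handled correctly.
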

        
Our next result provides a decomposition for an arbitrary  
bounded operator in terms of its Moore--Penrose inverse.

\begin{theorem}
\label{OR:03}
Let $\mathcal H_1$ and $\mathcal H_2$ be two Hilbert spaces, 
$A\in \mathcal B(\mathcal H_1,\mathcal H_2)$ and $B$ be 
its Moore--Penrose inverse. Then, the decomposition
$$
A= (I+B^*B)^{-1/2} T_{B^*}
$$
holds, where $T_{B^*}=B^*(I+BB^*)^{-1/2}+A(I+BB^*)^{-1/2}$.
\end{theorem}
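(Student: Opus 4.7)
My plan is to exploit Theorem~\ref{OR:02} through a duality trick. Since $(I+B^*B)^{-1/2}$ is self-adjoint on $\mathcal H_2$ and $T_B^{*} = T_{B^*}$ by Theorem~\ref{OR:02}, the identity $A = (I+B^*B)^{-1/2} T_{B^*}$ is equivalent, after taking adjoints of both sides, to
$$
A^{*} = T_B\,(I+B^*B)^{-1/2}.
$$
The advantage of this reformulated identity is that $T_B = B(I+B^*B)^{-1/2} + A^{*}(I+B^*B)^{-1/2}$, so multiplying on the right by the extra factor $(I+B^*B)^{-1/2}$ collapses each summand via the bounded functional-calculus identity $(I+B^*B)^{-1/2}(I+B^*B)^{-1/2} = (I+B^*B)^{-1}$ on $\mathcal H_2$.

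I would then expand
$$
T_B\,(I+B^*B)^{-1/2} = B(I+B^*B)^{-1} + A^{*}(I+B^*B)^{-1}
$$
and immediately apply item~(3) of Proposition~\ref{prop:3.1} to rewrite $B(I+B^*B)^{-1}$ as $A^{*}(I+AA^{*})^{-1}$. This trades the unbounded operator $B$ for the bounded $A^{*}$ and lets me factor $A^{*}$ on the left:
$$
T_B\,(I+B^*B)^{-1/2} = A^{*}\bigl[(I+AA^{*})^{-1} + (I+B^*B)^{-1}\bigr].
$$
Item~(4) of Proposition~\ref{prop:3.1} identifies the bracketed sum with $I + P_{\mathcal N(A^{*})}$, and since $\mathcal R(A) \subset \mathcal N(A^{*})^{\perp}$ forces $A^{*}P_{\mathcal N(A^{*})} = 0$, the bracket collapses to the identity and I conclude $T_B(I+B^*B)^{-1/2} = A^{*}$. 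Taking adjoints once more, using $[(I+B^*B)^{-1/2}]^{*} = (I+B^*B)^{-1/2}$ together with $T_B^{*} = T_{B^*}$ from Theorem~\ref{OR:02}, produces the claimed decomposition.

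The main delicate point concerns domains: because $B = A^{\dagger}$ is in general only closed and densely defined, the product $B(I+B^*B)^{-1}$ must be handled via the von Neumann estimates recalled just before Proposition~\ref{prop:3.1}, which guarantee that $(I+B^*B)^{-1}$ maps $\mathcal H_2$ into $\mathcal D(B)$ and make the composition a bona fide bounded operator. Once this single occurrence of $B$ is replaced by the bounded quantity $A^{*}(I+AA^{*})^{-1}$ via Proposition~\ref{prop:3.1}(3), the remainder of the argument lives entirely inside $\mathcal B(\mathcal H_1, \mathcal H_2)$, where the sums, products, and adjoints above are unproblematic; in particular the final adjoint step producing $A$ from $A^*$ is valid because $T_B\,(I+B^*B)^{-1/2}$ is a composition of two bounded operators.
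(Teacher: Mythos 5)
Your proof is correct, but it takes a genuinely different and in fact cleaner route than the paper's. The paper attacks $(I+B^*B)^{-1/2}T_{B^*}$ head on; since the inner factor of each summand of $T_{B^*}$ is $(I+BB^*)^{-1/2}$ while the outer factor is $(I+B^*B)^{-1/2}$, the two resolvent-type operators sit on opposite sides of $B^*$ and of $A$, so the paper must invoke the intertwining relation $(I+B^*B)^{-1/2}B^*\subset B^*(I+BB^*)^{-1/2}$ for the first summand and then run a case-by-case verification on the decomposition $\mathcal H_1=\mathcal N(A)\oplus\mathcal R(B)$ (using $AB\subset P_{\overline{\mathcal R(A)}}$ and $(I+BB^*)^{-1/2}B\subset B(I+B^*B)^{-1/2}$) to move the resolvent past $A$ in the second summand, before finishing with the $\mathcal H_1$-side identities of Proposition~\ref{prop:3.1}. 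By passing to adjoints first, you arrange for the two copies of $(I+B^*B)^{-1/2}$ to sit next to each other, so they merge into $(I+B^*B)^{-1}$ by functional calculus with no intertwining and no case analysis, and the $\mathcal H_2$-side identities (items 3 and 4 of Proposition~\ref{prop:3.1}) together with $A^*P_{\mathcal N(A^*)}=0$ finish the job; what the adjoint trick buys is precisely the elimination of the delicate commutation step, which is the only technical part of the paper's argument. The price is the appeal to $T_{B^*}^*=T_B$, which is legitimate since $T_B$ is bounded by Theorem~\ref{OR:02}, so $T_{B^*}^*=T_B^{**}=T_B$, and the final adjoint step is valid because both sides are bounded and everywhere defined. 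One cosmetic remark: your justification of $A^*P_{\mathcal N(A^*)}=0$ via $\mathcal R(A)\subset\mathcal N(A^*)^{\perp}$ is the adjoint of the fact you actually need; it is more direct to observe that $P_{\mathcal N(A^*)}$ maps into $\mathcal N(A^*)$, on which $A^*$ vanishes.
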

      
\begin{proof}
We have
$$
(I+B^*B)^{-1/2}T_{B^*}
=(I+B^*B)^{-1/2} \left(B^*(I+BB^*)^{-1/2}+A(I+BB^*)^{-1/2}\right).
$$
Moreover, since 
$$
(I+B^*B)^{-1/2} B^* \subset B^* (I+BB^*)^{-1/2}
$$
and, from the third item of Proposition~\ref{prop:3.1}, 
$$ 
B^*(I+BB^*)^{-1}=A(I+A^*A)^{-1},
$$  
it follows that
\begin{equation*}
(I+B^*B)^{-1/2}T_{B^*} 
= B^*(I+BB^*)^{-1}+(I+B^*B)^{-1/2} A  (I+BB^*)^{-1/2}.
\end{equation*}
A verification on $\mathcal H_1= \mathcal N(A)\oplus \mathcal R(B)$ 
shows that:
\begin{enumerate}
\item  if $x\in \mathcal N(A)$, then 
$$ 
A(I+BB^*)^{-1/2} x=0 = (I+B^*B)^{-1/2} Ax;
$$

\item if $x\in \mathcal R(B)$, then there exists 
$y\in \mathcal D(B)$ such that $x=By$. 
\end{enumerate}
Thus, 
$$ 
A(I+BB^*)^{-1/2}x = A (I+BB^*)^{-1/2} B y.
$$
Moreover, since 
$$
(I+BB^*)^{-1/2}By= B (I+B^*B)^{-1/2} y,
$$
we obtain that 
$$ 
A (I+BB^*)^{-1/2}x = AB (I+B^*B)^{-1/2}y.
$$ 
On the other hand, it is not so difficult to verify that 
$$ 
\left( AB (I+B^*B)^{-1/2}y,z \right)_{\mathcal H_2} 
= \left(    (I+B^*B)^{-1/2} y, z\right)_{\mathcal H_2}
$$ 
for all $z\in \mathcal H_2$. This implies that 
$$ 
A (I+BB^*)^{-1/2}x = (I+B^*B)^{-1/2}Ax.
$$ 
Therefore, we obtain that
\begin{equation*}
\begin{split}
(I+B^*B)^{-1/2}T_{B^*} 
&=  B^*(I+BB^*)^{-1}+A(I+BB^*)^{-1} \\
&= A(I+A^*A)^{-1}+A(I+BB^*)^{-1}\\
&= A \left( (I+A^*A)^{-1}+(I+BB^*)^{-1} \right).
\end{split}
\end{equation*}
Moreover, 
$$
(I+A^*A)^{-1}x= x= (I+BB^*)^{-1} x
$$
for $x\in \mathcal N(A)$, which implies that
\begin {equation*}
\begin{split}
(I+B^*B)^{-1/2}T_{B^*}x
&=  A \left( (I+A^*A)^{-1}+(I+BB^*)^{-1} \right) x\\
&= A(2x)\\
&= 2Ax \\
&= 0.
\end{split}
\end{equation*}
For $x\in \mathcal{R}(B)$, it follows, according with 
Proposition~\ref{prop:3.1}, that
$$ 
\left( (I+A^*A)^{-1}+(I+BB^*)^{-1} \right) x = x,
$$
which implies that
$$
(I+B^*B)^{-1/2}T_{B^*}x=Ax.
$$
Hence,  
$$ 
Ax= (I+B^*B)^{-1/2})T_{B^*}x
$$
holds for all $x\in \mathcal H_1= \mathcal N(A) \oplus \mathcal R(B)$.
\end{proof}
      

\section{\textbf{Sobolev spaces on Lipschitz domains}}
\label{sec:04} 
  
Let $\Omega$ be an open subset of $\mathbb R^d$, $d=1,2,3,\ldots$, 
$\partial \Omega$ be its boundary and $\overline{\Omega}$ its closure. 
We denote by $\mathcal C^k(\Omega)$, $k\in \mathbb{N}$ 
or $k= \infty$, the space of real $k$ times continuously 
differentiable functions on $\Omega$. The space $\mathcal{C}^{\infty}$
of all real functions on $\Omega$ with a compact support 
in $\Omega$ is denoted by $\mathcal C_c^{\infty} (\Omega)$.
For the partial differential derivatives of a function, 
we use the following notations:
$\partial_i u= \frac{\partial u}{\partial x_i}$,
$1\leq i \leq d$, for $\alpha = (\alpha_1,\ldots,\alpha_d) 
\in \mathbb N^d$, $\partial ^{\alpha} u 
= \partial_1 ^{\alpha_1}\ldots 
\partial_d ^{\alpha_d} u = \frac{\partial^{\alpha_1
+\cdots+\alpha_d}\  u}{\partial x_1^{\alpha_1}\ldots 
\partial x_d^{\alpha_d}}$ and
$|\alpha| = \alpha_1+\cdots+\alpha_d$.
For a sequence  $(\varphi_n)_{n\geq 1}$ in $\mathcal C_c^{\infty}(\Omega)$ 
and $\varphi \in \mathcal C_c^{\infty}(\Omega)$, we say that 
$(\varphi_n)_{n\geq 1} $ converges to $\varphi$  
if there exists a compact $Q\subset \Omega$ such that for all 
$n\geq 1 $ $\mathrm{supp}(\varphi_n) \subset Q$ and for all multi-index 
$\alpha \in \mathbb{N}^d$, the sequence $(\partial^{\alpha} \varphi_n)_{n\geq 1}$ 
converges uniformly to $\partial ^{\alpha} \varphi$. 
The space $\mathcal C_c^\infty(\Omega)$
induced by this convergence is denoted $\mathscr{D}(\Omega)$,
as in the theory of distributions, 
with $\mathscr{D}^{\prime}(\Omega)$ 
the space of distributions on $\Omega$.
For $k\in \mathbb N$, $H^k(\Omega)$ is the space 
of all distributions $u$ defined on $\Omega$ such that 
all partial derivatives of order at most $k$ lie in $L^2(\Omega)$, i.e., 
$$ 
\partial ^{\alpha} u \in L^2(\Omega) \ \ 
\forall  \ |\alpha | \leq k.
$$
This is a Hilbert space with the scalar product
\begin{equation*}
(u,v)_{k,\Omega} = \sum_{|\alpha | \leq k} 
\int_{\Omega} \partial ^{\alpha}u \ \overline{\partial^{\alpha}v} \ dx, 
\end{equation*}
where $dx$ is the Lebesgue measure, $u,v \in H^k(\Omega)$,
and $\overline{\partial^{\alpha}v} $ is the conjugate of $\partial^{\alpha}v$.
The corresponding norm, denoted by $\|\cdot\|_{k,\Omega}$, is given by
\begin{equation*}
\|u\|_{k,\Omega} = \left(\sum_{|\alpha| \leq k} 
\int_{\Omega} |\partial ^{\alpha}u |^2 \ dx \ \right)^{1/2}.
\end{equation*}
Sobolev spaces $H^s(\Omega)$, for non-integers $s$, 
are defined by the real interpolation method \cite{Ad,Mc,T}.

\begin{definition}
\label{def:LCB}
Let $\Omega$ be an open subset of $\mathbb R^d$ with boundary 
$\partial \Omega$ and closure $\overline{\Omega}$. 
We say that $\partial \Omega$ is Lipschitz continuous 
if for every $x\in \partial \Omega$ there exists a coordinate system 
$(\widehat{y}, y_d)\in \mathbb R^{d-1}\times\mathbb R$, 
a neighborhood $Q_{\delta,\delta'}(x)$ of $x $ and a Lipschitz function 
$\gamma_x:\widehat{Q}_{\delta} \rightarrow \mathbb R$  with 
the following properties:
\begin{enumerate}
\item $\Omega \cap Q_{\delta,\delta'}(x) 
= \left\{(\widehat{y},y_d) \in Q_{\delta,\delta'}(x) 
\ / \ \gamma_x(\widehat{x}) < y_{d} \right\}$;

\item $\partial \Omega \cap  Q_{\delta,\delta'}(x) 
=\left\{  (\widehat{y},y_d) \in Q_{\delta,\delta'}(x) 
\ / \ \gamma_x(\widehat{x}) = y_{d} \right\}$;
\end{enumerate}
where 
$$ 
Q_{\delta,\delta'}(x) 
= \left\{  (\widehat{y},y_d) \in \mathbb R^d \ / \  
\|\widehat{y}-\widehat{x}\| _{\mathbb R^{d-1}} < \delta  
\ \ \text{and} \ \ |y_d - x_d | < \delta' \right\}
$$
and 
$$ 
\widehat{Q}_{\delta}(x) = \{ \widehat{y} \in \mathbb R^{d-1} 
\ / \  \|\widehat{y}-\widehat{x}\| _{\mathbb R^{d-1}} < \delta  \} 
$$
for  $\delta, \delta' > 0$. 
An open connected subset $\Omega \subset \mathbb R^d$, whose boundary 
is Lipschitz continuous, is called a Lipschitz domain.
\end{definition}

For the rest of the paper, 
$\Omega\subset \mathbb R^d$ is a bounded Lipschitz domain.
The definitions of Sobolev spaces $H^s(\Omega)$ given above, 
remain the same for any $s$, but the spaces $H^s(\partial \Omega)$ 
can be defined by using charts on $\partial \Omega$ and partitions 
of unity subordinated to the covering of $\partial \Omega$. 
This is only possible for $|s|\leq 1$, because a Lipschitz surface 
is locally the graph of a Lipschitz function.
In particular, one frequently uses the trace spaces  
$H^1(\partial \Omega)$ and  the space of real functions 
$L^2(\partial \Omega)$
that are $L^2$ on $\partial \Omega$ 
for the surface  measure $d\sigma$.
Here we also use $H^{1/2}(\partial \Omega)$ and its dual, 
denoted by $H^{-1/2}(\partial \Omega)$.

\begin{lemma}[See, e.g., \cite{Ad}]
\label{lem:4.2}
For a bounded Lipschitz domain $\Omega$ with boundary $\partial \Omega$, 
the space $H^{1/2}(\partial \Omega)$ is dense in $L^2(\partial \Omega)$. 
\end{lemma}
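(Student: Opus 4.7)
The plan is to reduce the statement to its standard Euclidean counterpart via localisation, exploiting the Lipschitz atlas guaranteed by Definition~\ref{def:LCB}. Since $\Omega$ is bounded, $\partial \Omega$ is compact; thus from the open covering $\{Q_{\delta, \delta'}(x)\}_{x \in \partial \Omega}$ one extracts a finite subcover $\{V_j\}_{j=1}^N$, each equipped with a Lipschitz graph representation $\widehat{y} \mapsto (\widehat{y}, \gamma_{x_j}(\widehat{y}))$ defined on some $\widehat{Q}_{\delta_j} \subset \mathbb{R}^{d-1}$. I then fix a $\mathcal{C}^{\infty}$ partition of unity $\{\chi_j\}_{j=1}^N$ subordinate to $\{V_j\}$ on an open neighbourhood of $\partial \Omega$.

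Next, given $f \in L^2(\partial \Omega)$, I decompose $f = \sum_{j=1}^N \chi_j f$. Each summand $f_j := \chi_j f$ is supported in $V_j \cap \partial \Omega$ and, via the above parametrisation, corresponds to a function $g_j \in L^2(\widehat{Q}_{\delta_j})$, compactly supported inside $\widehat{Q}_{\delta_j}$. The crucial flat statement is that $\mathcal{C}_c^{\infty}(\widehat{Q}_{\delta_j})$ is dense in $L^2(\widehat{Q}_{\delta_j})$ and is continuously embedded in $H^{1/2}(\widehat{Q}_{\delta_j})$; consequently $H^{1/2}(\widehat{Q}_{\delta_j})$ is dense in $L^2(\widehat{Q}_{\delta_j})$. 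I therefore approximate each $g_j$ by $\varphi_{j,n} \in \mathcal{C}_c^{\infty}(\widehat{Q}_{\delta_j}) \subset H^{1/2}(\widehat{Q}_{\delta_j})$ in the $L^2$ norm.

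To conclude, I push each $\varphi_{j,n}$ back to $\partial \Omega$ through the Lipschitz chart, multiply by a smooth cut-off supported in $V_j$ if necessary to ensure global membership, and sum over $j$. The resulting element lies in $H^{1/2}(\partial \Omega)$ by the very definition of this space through charts, and is arbitrarily close to $f$ in $L^2(\partial \Omega)$, proving density.

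The main obstacle is the bookkeeping at the level of the charts, namely checking that (i) pulling a compactly supported $H^{1/2}$ function on $\widehat{Q}_{\delta_j}$ through the Lipschitz graph map produces a genuine element of $H^{1/2}(\partial \Omega)$ and (ii) multiplication by each $\chi_j$ preserves the $H^{1/2}$ regularity. Both are standard invariance properties, and they are precisely what forces the restriction $|s|\leq 1$ in the very definition of $H^s(\partial \Omega)$ on a Lipschitz surface, as emphasised in the paragraph preceding the lemma; beyond this range, Lipschitz changes of variables would no longer preserve the space, and the localisation argument would break down.
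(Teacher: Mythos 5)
The paper does not prove this lemma at all: it is quoted as a standard fact with a pointer to Adams--Fournier, so there is no ``paper proof'' to match against. Your localisation argument is correct and is essentially the textbook route one would find behind that citation. The two points you flag as needing care are exactly the right ones, and both hold: multiplication by a smooth compactly supported cut-off preserves $H^{1/2}$, and composition with a bi-Lipschitz graph map preserves $H^{1/2}$ (this is the invariance that restricts the chart-based definition of $H^s(\partial\Omega)$ to $|s|\leq 1$, as you note). Two further details are worth recording explicitly. First, the pullback of $\chi_j f$ lies in $L^2(\widehat{Q}_{\delta_j})$ for Lebesgue measure because the surface-measure density $\sqrt{1+|\nabla\gamma_{x_j}|^2}$ is bounded above and below, so the $L^2$ norms on the chart and on the patch of $\partial\Omega$ are equivalent; this is what lets you transfer the approximation error back and forth. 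Second, your choice of $\mathcal{C}_c^{\infty}$ approximants quietly sidesteps the one genuine trap here, namely that extension by zero is \emph{not} bounded on all of $H^{1/2}$ of a subdomain (the $H^{1/2}$ versus $H^{1/2}_{00}$ issue); for a $\mathcal{C}_c^{\infty}$ function the zero extension is again $\mathcal{C}_c^{\infty}$, so no boundedness of an extension operator is ever invoked. Finally, note that an even shorter proof is available: $\sigma$ is a finite Borel measure on the compact set $\partial\Omega$, so restrictions to $\partial\Omega$ of functions in $\mathcal{C}^{\infty}(\mathbb{R}^d)$ are dense in $L^2(\partial\Omega)$ (via density of $C(\partial\Omega)$ and Stone--Weierstrass), and such restrictions are Lipschitz on $\partial\Omega$, hence lie in $H^1(\partial\Omega)\subset H^{1/2}(\partial\Omega)$; this avoids the chart bookkeeping entirely, at the cost of invoking $H^1(\partial\Omega)\hookrightarrow H^{1/2}(\partial\Omega)$.
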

 
\begin{lemma}[See, e.g., \cite{Ad}]  
\label{lem:4.3}
Let $\Omega$ be a bounded Lipschitz domain. 
Then, the space $H^s(\Omega)$ is compactly imbedded in $H^{s'}(\Omega)$ 
for all $s' < s$ in $\mathbb R$.
\end{lemma}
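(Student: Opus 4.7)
The plan is to reduce the compact embedding on the Lipschitz domain $\Omega$ to a compact embedding on $\mathbb{R}^d$ for functions with uniformly compact support, and then apply a Fourier-analytic argument. The core ingredients are (i) an extension theorem for Lipschitz domains and (ii) the Fourier characterization of $H^s(\mathbb{R}^d)$ as the weighted space $\{u \in \mathscr{D}'(\mathbb{R}^d) : (1+|\xi|^2)^{s/2}\widehat{u} \in L^2(\mathbb{R}^d)\}$, which extends the integer-order definition to arbitrary $s \in \mathbb{R}$ by interpolation.

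First I would invoke Stein's extension theorem, valid for bounded Lipschitz domains, to obtain a bounded linear extension $E: H^s(\Omega) \to H^s(\mathbb{R}^d)$ for every $s \in \mathbb{R}$ (for $s<0$ this is handled by duality, or one proves the result directly for $s\geq 0$ and deduces the case $s<0$ by interpolation and duality). Fix a cutoff $\chi \in \mathcal{C}_c^\infty(\mathbb{R}^d)$ with $\chi \equiv 1$ on a neighborhood of $\overline{\Omega}$, so that $u \mapsto \chi(Eu)$ is bounded from $H^s(\Omega)$ into the subspace of $H^s(\mathbb{R}^d)$ consisting of functions supported in the fixed compact set $K := \mathrm{supp}(\chi)$.

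Now let $(u_n) \subset H^s(\Omega)$ be a bounded sequence and set $v_n := \chi(Eu_n)$, so that $(v_n)$ is bounded in $H^s(\mathbb{R}^d)$ with $\mathrm{supp}(v_n)\subset K$. Since $v_n$ has compact support, its Fourier transform $\widehat{v}_n$ is an entire function, and the bound on $v_n$ in $L^2$ together with the compact support give a uniform pointwise bound on $\widehat{v}_n$ on any bounded set in $\xi$-space; a similar bound holds for $\nabla_\xi \widehat{v}_n$, since $|\nabla_\xi\widehat{v}_n(\xi)| \leq \|x v_n\|_{L^1}$, which is controlled by the size of $K$ and the $L^2$-norm of $v_n$. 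Hence $(\widehat{v}_n)$ is equicontinuous and uniformly bounded on compacts, and Arzelà--Ascoli produces a subsequence (still denoted $\widehat{v}_n$) converging uniformly on every compact subset of $\mathbb{R}^d$.

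Finally I would split the $H^{s'}$-norm of the difference $v_n - v_m$ into low and high frequencies. For $R > 0$,
\begin{equation*}
\|v_n - v_m\|_{H^{s'}}^2 = \int_{|\xi|\leq R} (1+|\xi|^2)^{s'}|\widehat{v}_n - \widehat{v}_m|^2\, d\xi + \int_{|\xi|>R}(1+|\xi|^2)^{s'}|\widehat{v}_n - \widehat{v}_m|^2\, d\xi.
\end{equation*}
The high-frequency piece is bounded by $(1+R^2)^{s'-s}\|v_n - v_m\|_{H^s}^2$, which is uniformly small for large $R$ because $s'<s$ and $(v_n)$ is $H^s$-bounded. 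With $R$ so chosen, the low-frequency piece tends to $0$ as $n,m \to \infty$ by the uniform convergence of $\widehat{v}_n$ on $\{|\xi|\leq R\}$ together with the finite measure of that ball. Thus $(v_n)$ is Cauchy in $H^{s'}(\mathbb{R}^d)$, and restricting to $\Omega$ yields a subsequence of $(u_n)$ converging in $H^{s'}(\Omega)$. The main technical obstacle is securing the extension operator $E$ uniformly over the range of $s$ considered; once Stein's extension is available for Lipschitz boundaries, the Fourier splitting argument above delivers compactness cleanly for every real $s'<s$.
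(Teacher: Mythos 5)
The paper does not prove this lemma at all: it is quoted as a standard fact from Adams--Fournier, so there is no in-paper argument to compare against. Your proof is the classical extension-plus-Fourier route (Stein extension, cutoff to fix a compact support, Paley--Wiener/Arzel\`a--Ascoli on $\widehat v_n$ over compact frequency sets, and the high-frequency tail estimate $(1+R^2)^{s'-s}\|v_n-v_m\|_{H^s}^2$ with $s'-s<0$), and for $s\ge 0$ it is correct and complete in all essentials; this is exactly how the result is proved in the standard references. Two points deserve tightening. First, your uniform bounds $|\widehat v_n(\xi)|\le \|v_n\|_{L^1}$ and $|\nabla_\xi\widehat v_n(\xi)|\le \|xv_n\|_{L^1}$ presuppose $v_n\in L^2\subset L^1_{loc}$ with compact support, i.e., $s\ge 0$; for $s<0$ you must instead estimate the pairing $\widehat v_n(\xi)=\langle v_n,\chi_1 e^{-ix\cdot\xi}\rangle$ by $\|v_n\|_{H^s}\,\|\chi_1 e^{-ix\cdot\xi}\|_{H^{-s}}$, which still gives bounds that are uniform on compact sets of $\xi$, so the argument survives but is not the one you wrote. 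Second, the negative-order cases are only gestured at (``by duality, or by interpolation''): to make that precise you must commit to a definition of $H^{s}(\Omega)$ for $s<0$ (dual of $H^{-s}_0(\Omega)$ versus restrictions of $H^{s}(\mathbb R^d)$ with the quotient norm), since the final step --- passing from Cauchyness of $v_n$ in $H^{s'}(\mathbb R^d)$ to Cauchyness of $u_n$ in $H^{s'}(\Omega)$ --- uses the restriction-space description of $H^{s'}(\Omega)$, and the two definitions agree on Lipschitz domains only for suitable $s'$. Within the scope this paper actually uses ($|s|\le 1$, with the negative orders defined by duality), your argument, supplemented by the duality step, does establish the lemma.
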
 


\section{\textbf{Trace and embedding operators}}
\label{sec:05}

Let $\Omega \subset \mathbb{R}^{d}$, $d\geq 2$, 
be a bounded Lipschitz domain. The trace operator 
maps each continuous function $u$ on $\overline{\Omega}$ 
to its restriction onto $\partial\Omega$. 
Under the condition that $\Omega$ is a bounded Lipschitz domain, 
the trace operator may be extended to be a bounded surjective operator, 
denoted by $\Gamma_s$, from $H^s(\Omega)$ to 
$H^{s-\frac{1}{2}}(\partial\Omega)$ for $1/2<s<3/2$  
\cite{Co,Mc}. The range and null space of $\Gamma_s$ 
are given by
$$
\mathcal R(\Gamma_s)= H^{s-1/2}(\partial \Omega) 
\quad \mbox{and} \quad 
\mathcal N(\Gamma_s) = H_0^s(\Omega),
$$
respectively, where $H_0^s(\Omega)$ is defined to be the closure in 
$H^s(\Omega)$ of infinitely differentiable functions 
compactly supported in $\Omega$.
For $s=3/2$, this is no longer valid.
For $s>3/2$, the trace operator 
from $H^s(\Omega)$ to $H^1(\partial \Omega)$ is bounded \cite{Co}. 
Because of this limitation, the spaces $H^s(\Omega)$ with large $|s|$ 
are not easy to deal when considering boundary value problems 
in Lipschitz domains.	

Let us set $\Gamma=T_1 \Gamma_1$, where $\Gamma_1$ is the trace 
operator from $H^1(\Omega)$ to $H^{1/2}(\partial \Omega)$ and 
$T_1$ is the embedding operator from  $H^{1/2}(\partial \Omega)$  
into $L^2(\partial \Omega)$.  
According to a result of Gagliardo \cite{Ga}, it follows that
$\mathcal R(\Gamma) = H^{1/2}(\partial \Omega)$. Since $\Gamma_1$ 
is bounded and $T_1$ is compact \cite{N}, 
the trace operator $\Gamma$ from $H^1(\Omega)$ 
to $L^2(\partial \Omega)$ is compact. 
	
\begin{lemma}[See, e.g., \cite{Mc,N}]
\label{lem:5.1}
Let $\Gamma$ be the trace operator 
from $H^1(\Omega)$ into $L^2(\partial \Omega)$. 
Then, the adjoint operator $\Gamma^*$ is compact and injective.
\end{lemma}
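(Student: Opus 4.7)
The plan is to use the factorization $\Gamma=T_1\Gamma_1$ together with two well-known Hilbert space principles: Schauder's theorem on adjoints of compact operators, and the duality relation $\mathcal{N}(\Gamma^*)=\overline{\mathcal{R}(\Gamma)}^{\perp}$.

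First I would address compactness of $\Gamma^*$. The excerpt already records that $\Gamma_1\in\mathcal{B}(H^1(\Omega),H^{1/2}(\partial\Omega))$ is bounded and that the embedding $T_1$ of $H^{1/2}(\partial\Omega)$ into $L^2(\partial\Omega)$ is compact (this uses Lemma~\ref{lem:4.3} applied on $\partial\Omega$). The composition of a compact operator with a bounded one is compact, hence $\Gamma=T_1\Gamma_1$ is compact as an operator from $H^1(\Omega)$ into $L^2(\partial\Omega)$. By Schauder's theorem, the adjoint of a compact operator between Hilbert spaces is compact, so $\Gamma^*\in\mathcal{B}(L^2(\partial\Omega),H^1(\Omega))$ is compact.

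Next I would prove injectivity of $\Gamma^*$. Using the standard identity
\begin{equation*}
\mathcal{N}(\Gamma^*)=\mathcal{R}(\Gamma)^{\perp},
\end{equation*}
it suffices to show that $\mathcal{R}(\Gamma)$ is dense in $L^2(\partial\Omega)$. But $\Gamma=T_1\Gamma_1$ and Gagliardo's result (quoted just above the statement) gives $\mathcal{R}(\Gamma_1)=H^{1/2}(\partial\Omega)$, so $\mathcal{R}(\Gamma)=T_1(H^{1/2}(\partial\Omega))=H^{1/2}(\partial\Omega)$ viewed as a subspace of $L^2(\partial\Omega)$. By Lemma~\ref{lem:4.2}, $H^{1/2}(\partial\Omega)$ is dense in $L^2(\partial\Omega)$, hence $\mathcal{R}(\Gamma)^{\perp}=\{0\}$ and $\Gamma^*$ is injective.

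The argument is essentially an assembly of already-cited facts, so there is no real obstacle; the only subtle point to state cleanly is the density step, because one must be careful to identify $\mathcal{R}(\Gamma)$ as a subspace of $L^2(\partial\Omega)$ (not of $H^{1/2}(\partial\Omega)$ with its stronger norm) before invoking Lemma~\ref{lem:4.2}. Once that identification is made explicit, both conclusions follow immediately.
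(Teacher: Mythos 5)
Your argument is correct, but note that the paper does not actually prove Lemma~\ref{lem:5.1}: it is imported wholesale from the literature (McLean \cite{Mc} and Ne\v{c}as \cite{N}), so there is no in-text proof to match against. What you have written is the standard derivation and it assembles exactly the facts the paper records in the surrounding paragraphs: $\Gamma=T_1\Gamma_1$ with $\Gamma_1$ bounded and $T_1$ compact gives compactness of $\Gamma$, Schauder's theorem transfers this to $\Gamma^*$, and the orthogonality relation $\mathcal N(\Gamma^*)=\mathcal R(\Gamma)^{\perp}$ together with Gagliardo's surjectivity of $\Gamma_1$ and the density of $H^{1/2}(\partial\Omega)$ in $L^2(\partial\Omega)$ (Lemma~\ref{lem:4.2}) yields injectivity. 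Your remark about identifying $\mathcal R(\Gamma)$ as a subspace of $L^2(\partial\Omega)$ before invoking density is exactly the right point to make explicit. One small correction: the compactness of $T_1$ does not follow from Lemma~\ref{lem:4.3} ``applied on $\partial\Omega$'' --- that lemma is stated for the Sobolev scale on the domain $\Omega$, not on its boundary; the boundary version (a Rellich-type theorem for $H^{1/2}(\partial\Omega)\hookrightarrow L^2(\partial\Omega)$ on a Lipschitz surface) is a separate fact, which the paper attributes to \cite{N}, so you should cite that rather than Lemma~\ref{lem:4.3}. Finally, since the paper later uses $\Gamma^*$ as the adjoint with respect to the inner product $(\cdot,\cdot)_{\partial,\Omega}$ on $H^1_{\partial}(\Omega)$, it is worth observing that your proof is insensitive to this choice: equivalence of the norms $\|\cdot\|_{1,\Omega}$ and $\|\cdot\|_{\partial,\Omega}$ preserves compactness, and the null space of the adjoint is always $\mathcal R(\Gamma)^{\perp}$ computed in $L^2(\partial\Omega)$, which does not depend on which equivalent inner product is placed on $H^1(\Omega)$.
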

	
Now, induce $H^1(\Omega)$ by the following inner product:
\begin{equation*}
(u,v)_{\partial, \Omega}= \int_{\Omega} \nabla u \nabla v dx 
+ \int_{\partial \Omega} \Gamma u \Gamma v d \sigma 
\quad \forall u,v  \in H^1(\Omega).
\end{equation*}
The associated norm $\|\cdot\|_{\partial, \Omega}$ is given by
\begin{equation*}
\|u\|_{\partial, \Omega}=\left(\|\nabla u\|^2_{0,\Omega}
+\|\Gamma u\|^2_{0,\partial \Omega} \right)^{1/2}
\end{equation*}
and 
$H^{1}(\Omega)$, induced with the inner product $(\cdot,\cdot)_{\partial,\Omega}$, 
is denoted by $H_{\partial}^{1}(\Omega)$. A further interesting remark is that
$H^{1}(\Omega)$ is the completion of $\mathcal C^1(\overline \Omega)$ with respect 
to the norm $\|\cdot\|_{1, \Omega}$. Moreover, a classical result
of Ne\v cas, asserts that under the condition $\Omega$ 
is a bounded Lipschitz domain, the norms $\|\cdot\|_{\partial, \Omega}$ 
and $\|\cdot\|_{1, \Omega}$ are equivalent \cite{N}. 
We denote by $\partial _{\nu}$ the normal derivative map, 
which maps each  $v\in C^1(\overline{\Omega})$ to 
$\partial_{\nu} v = \nu \cdot (\nabla v)_{| \partial\Omega}$ 
into $L^2(\partial\Omega)$, where $\nu$ is the unit outward normal 
on $\partial \Omega$. Under the condition $\Omega$ is a bounded Lipschitz domain, 
$\partial _{\nu}$ may be extended to a bounded linear operator, denoted by
$\widehat \partial _{\nu}$, from $H_{\Delta}^1(\Omega)$ 
to $H^{-1/2}(\partial\Omega)$, where 
$$
H_{\Delta}^1(\Omega)
= \left\{  \  v \in H^1(\Omega) \ / \ \Delta v \in L^2(\Omega) \right\}.
$$
This is a consequence of the following lemma.

\begin{lemma}[See Lemma~20.2 of \cite{T}]
\label{lem:5.2}
The application $w\longmapsto w \cdot \nu$  defined  
from $\big (\mathcal D(\overline{\Omega}) \big ) ^d $ into 
$L^{\infty} (\partial \Omega)$ is well defined and extends into 
a linear continuous map from $H_{div}(\Omega)$ into the dual space 
of $H^{1/2}(\partial\Omega)$, i.e., $H^{-1/2}(\partial\Omega)$, where 
$$
H_{div}(\Omega)= 
\left\{  w \in (L^2(\Omega))^d \ | \ \mathrm{div}\, w \in L^2(\Omega) \right\}
$$
with the norm 
$$
\|V\|_{div,\Omega}^2 = \sum_{i=1}^{d} \|v_i\|_{0,\Omega}^2 
+ \|\mathrm{div}\, v_i\|_{0,\Omega}^2 
$$
for any $V=(v_1, \ldots, v_d)\in  H_{div}(\Omega)$.
Moreover, the mapping is surjective.
\end{lemma}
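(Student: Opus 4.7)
The plan is to build the extension by duality through Green's formula, then obtain surjectivity by solving a coercive variational problem. For $w\in(\mathcal D(\overline\Omega))^d$ and any $\varphi\in H^1(\Omega)$, the classical divergence theorem yields
$$
\int_{\partial\Omega}(w\cdot\nu)\,\Gamma_1\varphi\,d\sigma
=\int_{\Omega}w\cdot\nabla\varphi\,dx+\int_{\Omega}(\mathrm{div}\,w)\,\varphi\,dx,
$$
and Cauchy--Schwarz bounds the right-hand side by $\|w\|_{div,\Omega}\|\varphi\|_{1,\Omega}$. Since the trace operator $\Gamma_1:H^1(\Omega)\to H^{1/2}(\partial\Omega)$ is surjective and possesses a bounded right inverse (a consequence of the Gagliardo result recalled in Section~\ref{sec:05}), for each $g\in H^{1/2}(\partial\Omega)$ we can choose a lift $\varphi_g\in H^1(\Omega)$ with $\Gamma_1\varphi_g=g$ and $\|\varphi_g\|_{1,\Omega}\leq C\|g\|_{H^{1/2}(\partial\Omega)}$. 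I would define
$$
\langle\widehat{w\cdot\nu},g\rangle
:=\int_{\Omega}w\cdot\nabla\varphi_g\,dx+\int_{\Omega}(\mathrm{div}\,w)\,\varphi_g\,dx
$$
and check independence of the lift: if $\Gamma_1\varphi_1=\Gamma_1\varphi_2$, the difference lies in $H_0^1(\Omega)$ and approximation by $\mathscr D(\Omega)$-functions together with integration by parts shows that the two expressions agree.

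Next I would extend by density. The density of $(\mathcal D(\overline\Omega))^d$ in $H_{div}(\Omega)$ (proved via a partition of unity subordinate to a Lipschitz chart atlas, with a tangential translation pushing the support strictly inside $\Omega$ followed by mollification) allows the uniquely determined, norm-continuous extension $\widehat\partial_\nu:H_{div}(\Omega)\to H^{-1/2}(\partial\Omega)$, together with the generalized Green identity valid for $w\in H_{div}(\Omega)$ and $\varphi\in H^1(\Omega)$.

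For surjectivity, given $g\in H^{-1/2}(\partial\Omega)$, I would apply the Lax--Milgram theorem to the coercive continuous bilinear form $a(u,\varphi)=(u,\varphi)_{1,\Omega}$ on $H^1(\Omega)$ and the continuous functional $\varphi\mapsto\langle g,\Gamma_1\varphi\rangle$, producing $u\in H^1(\Omega)$ with
$$
\int_\Omega\nabla u\cdot\nabla\varphi\,dx+\int_\Omega u\varphi\,dx
=\langle g,\Gamma_1\varphi\rangle\quad\text{for all }\varphi\in H^1(\Omega).
$$
Testing against $\varphi\in\mathscr D(\Omega)$ gives $-\Delta u+u=0$ in the sense of distributions, hence $\Delta u=u\in L^2(\Omega)$. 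Setting $w:=\nabla u\in(L^2(\Omega))^d$, one has $\mathrm{div}\,w=\Delta u\in L^2(\Omega)$, so $w\in H_{div}(\Omega)$. Comparing the variational identity with the generalized Green formula shows $\widehat\partial_\nu w=g$, which proves surjectivity.

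The hard part is twofold. First, the density of smooth vector fields up to the boundary in $H_{div}(\Omega)$ is genuinely nontrivial in the Lipschitz setting, where one cannot simply mollify: the standard route is to use the local graph representation of $\partial\Omega$ from Definition~\ref{def:LCB}, translate vertically into $\Omega$ so that the mollified field has its mollifier supported inside $\Omega$, and then glue via a partition of unity. Second, verifying that the definition of $\langle\widehat{w\cdot\nu},g\rangle$ is well-posed reduces to the sub-claim that smooth solenoidal-type test integrations produce no residual boundary contribution on $H_0^1(\Omega)$, which is morally the content of the characterization of $H_0^1(\Omega)$ as the kernel of $\Gamma_1$ — a point that ultimately relies on the Lipschitz regularity of $\partial\Omega$ as well.
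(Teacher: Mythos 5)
The paper offers no proof of Lemma~\ref{lem:5.2} to compare against: it is quoted as Lemma~20.2 of Tartar \cite{T} and used as a black box (its only role here is to yield Theorem~\ref{prop:5.3} and the Green formula). Your argument is the standard textbook proof of the normal-trace theorem on $H_{div}(\Omega)$, and its architecture is sound: define $\widehat{w\cdot\nu}$ by duality through the divergence theorem using a bounded right inverse of the surjective trace $\Gamma_1$ (which exists by the open mapping theorem, given $\mathcal R(\Gamma_1)=H^{1/2}(\partial\Omega)$); check independence of the lift via $\mathcal N(\Gamma_1)=H_0^1(\Omega)$ and the density of $\mathscr D(\Omega)$ in $H_0^1(\Omega)$; and get surjectivity by solving $-\Delta u+u=0$ with Lax--Milgram for the functional $\varphi\mapsto\langle g,\Gamma_1\varphi\rangle$ and setting $w=\nabla u$. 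The surjectivity step in particular is complete and correct as written.

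The one place you should be careful is the density step. As described --- ``a tangential translation pushing the support strictly inside $\Omega$ followed by mollification'' --- the approximants would land in $\big(\mathscr D(\Omega)\big)^d$, and the closure of $\big(\mathscr D(\Omega)\big)^d$ in $H_{div}(\Omega)$ is precisely the kernel of the normal-trace map, a proper closed subspace; so that version of the density claim is false and would collapse the lemma. The correct manoeuvre in a local graph chart of Definition~\ref{def:LCB} is to translate $w$ along the $e_d$ direction so that the translated field is defined on a neighborhood of $\overline{\Omega\cap Q_{\delta,\delta'}(x)}$, then mollify with a sufficiently small radius, producing approximants smooth up to the boundary but not compactly supported in $\Omega$. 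Better still, note that the density step is dispensable for this lemma: your duality formula already defines $\widehat{w\cdot\nu}$ directly on all of $H_{div}(\Omega)$, the lift-independence argument needs only the distributional definition of $\mathrm{div}\,w$ and the density of $\mathscr D(\Omega)$ in $H_0^1(\Omega)$, the resulting functional is bounded by $C\|w\|_{div,\Omega}$, and it agrees with $w\cdot\nu$ on $\big(\mathcal D(\overline\Omega)\big)^d$ by the classical Green formula --- which is everything the statement requires.
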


As a consequence, we prove the following result.

\begin{theorem}
\label{prop:5.3}
For all $u\in H_{\Delta}^1(\Omega)$ there exists  
$\widehat \partial _{\nu} u  \in H^{-1/2}(\partial\Omega)$ 
such that 
$$ 
\int_{\Omega} \nabla u \nabla v dx = - \int_{\Omega} \Delta u \ v  \ dx 
+ \left<\widehat{\partial _{\nu}} u, \Gamma_1 v\right> 
$$
for all $v\in H^1(\Omega)$, where  $\left<\cdot,\cdot\right>$ is the duality pairing 
between $H^{\frac{1}{2}}(\partial\Omega)$ and $H^{-\frac{1}{2}}(\partial\Omega)$.
The application $u \longmapsto \widehat{\partial _{\nu}} u$ is the continuous 
extension of 
$$
u\longmapsto \nu \cdot (\nabla v)_{| \partial\Omega},
$$  
which is defined for all $u \in \mathscr{D}(\overline{\Omega})$,
where $\mathscr{D}(\overline{\Omega})$ denotes the 
space of all real valued $\mathcal C^{\infty}$
functions with a compact support on $\overline{\Omega}$.  
\end{theorem}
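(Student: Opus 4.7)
The plan is to view $\nabla u$ as a vector field in $H_{\text{div}}(\Omega)$, invoke Lemma~\ref{lem:5.2} to produce its normal trace as a well-defined element of $H^{-1/2}(\partial\Omega)$, and then verify the generalized Green's formula by density from the classical divergence theorem.

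First, I would observe that if $u\in H^1_{\Delta}(\Omega)$, then $\nabla u\in (L^2(\Omega))^d$ and $\mathrm{div}(\nabla u)=\Delta u\in L^2(\Omega)$, hence $\nabla u\in H_{\text{div}}(\Omega)$ with
$$
\|\nabla u\|_{\text{div},\Omega}^2
= \|\nabla u\|_{0,\Omega}^2 + \|\Delta u\|_{0,\Omega}^2.
$$
By Lemma~\ref{lem:5.2}, the map $w\longmapsto w\cdot\nu$ extends from $(\mathscr{D}(\overline{\Omega}))^d$ to a bounded linear map $H_{\text{div}}(\Omega)\to H^{-1/2}(\partial\Omega)$. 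I would then \emph{define}
$$
\widehat{\partial_{\nu}}u \;:=\; (\nabla u)\cdot\nu \;\in\; H^{-1/2}(\partial\Omega),
$$
which, by construction, coincides with $\nu\cdot(\nabla u)_{|\partial\Omega}$ for $u\in\mathscr{D}(\overline{\Omega})$ and depends continuously on $u\in H^1_{\Delta}(\Omega)$, settling both the existence claim and the continuous-extension claim.

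Next, I would establish the generalized integration-by-parts identity
$$
\int_{\Omega}\bigl(w\cdot\nabla v + v\,\mathrm{div}\,w\bigr)\,dx
\;=\;\langle w\cdot\nu,\Gamma_1 v\rangle
$$
for every $w\in H_{\text{div}}(\Omega)$ and every $v\in H^1(\Omega)$. For smooth $w\in(\mathscr{D}(\overline{\Omega}))^d$ and smooth $v\in \mathcal{C}^1(\overline{\Omega})$ this is the classical divergence theorem applied to $vw$. One then appeals to the density of $(\mathscr{D}(\overline{\Omega}))^d$ in $H_{\text{div}}(\Omega)$ and of $\mathcal{C}^1(\overline{\Omega})$ in $H^1(\Omega)$: the left-hand side is bilinear and continuous in $(w,v)\in H_{\text{div}}(\Omega)\times H^1(\Omega)$ by Cauchy--Schwarz, while the right-hand side is continuous because $w\mapsto w\cdot\nu$ is bounded into $H^{-1/2}(\partial\Omega)$ (Lemma~\ref{lem:5.2}) and $\Gamma_1$ is bounded into $H^{1/2}(\partial\Omega)$. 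Specializing to $w=\nabla u$ and rearranging yields exactly
$$
\int_{\Omega}\nabla u\cdot\nabla v\,dx
\;=\; -\int_{\Omega}\Delta u\,v\,dx
+\bigl\langle\widehat{\partial_{\nu}}u,\Gamma_1 v\bigr\rangle,
$$
as required.

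The main obstacle is the density step: one needs the density of smooth vector fields in $H_{\text{div}}(\Omega)$ on a bounded Lipschitz domain (a standard but nontrivial fact relying on the Lipschitz regularity of $\partial\Omega$), together with the continuity of the boundary duality pairing in the limit. The compatibility of the two extensions---first extending $w\mapsto w\cdot\nu$ to $H_{\text{div}}(\Omega)$, and then passing to the limit in the right-hand side of the identity---hinges precisely on the surjectivity/continuity statement already embedded in Lemma~\ref{lem:5.2}, after which the identity extends automatically and the theorem follows.
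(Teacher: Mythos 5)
Your proposal is correct and follows essentially the same route as the paper: set $w=\nabla u$, note $w\in H_{\mathrm{div}}(\Omega)$, and apply Lemma~\ref{lem:5.2} to obtain $\widehat{\partial_\nu}u=w\cdot\nu\in H^{-1/2}(\partial\Omega)$ together with the integration-by-parts identity. The only difference is that you spell out the density argument behind the generalized Green's identity, which the paper absorbs into its citation of Lemma~\ref{lem:5.2}.
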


\begin{proof}
Let $u\in H_{\Delta}^1(\Omega)$. By setting $w=\nabla u$, we have 
$$
w\in \left( L^2(\Omega) \right) ^d \quad \mbox{and} 
\quad \mathrm{div}\, w = \mathrm{div}\, \nabla u = \Delta u \in L^2(\Omega).
$$ 
According to Lemma~\ref{lem:5.2}, there exists 
$w\cdot\nu \in H^{-1/2}(\partial\Omega)$ such that  
$$ 
\int_{\Omega} w \nabla v \ dx + \int_{\Omega} (\mathrm{div}\, w) v \ dx 
= \left<w\cdot\nu, \Gamma_1 v \right>
$$ 
for all $v \in H^1(\Omega)$ or 
$$ 
\int_{\Omega} \nabla u \nabla v dx + \int_{\Omega} \Delta u \ v  \ dx 
= \left<\widehat{\partial _{\nu}} u, \Gamma_1 v\right>,
$$
where $\left<\cdot,\cdot\right>$ is the duality pairing between 
$H^{\frac{1}{2}}(\partial\Omega)$ and $H^{-\frac{1}{2}}(\partial\Omega)$.
\end{proof}

From Theorem~\ref{prop:5.3}, we can immediately
write the following Green's formula:
	
\begin{corollary}[Green's formula]
\label{prop:GF}
Let $\Omega$ be a bounded Lipschitz domain. Then, 
$$ 
\int_{\Omega} \nabla u \nabla v dx = - \int_{\Omega} \Delta u \ Ev  \ dx 
+ \left<\widehat{\partial _{\nu}} u, \Gamma_1 v\right>
$$
for all $u \in H_{\Delta}^1(\Omega)$  and $ v \in H^1(\Omega)$,
where $E$ is the embedding operator from $H^1(\Omega)$ into $L^2(\Omega)$ and 
$\left<\cdot,\cdot\right>$ is the duality pairing between 
$H^{\frac{1}{2}}(\partial\Omega)$ and  $H^{-\frac{1}{2}}(\partial\Omega)$.
\end{corollary}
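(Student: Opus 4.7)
The plan is to apply Theorem~\ref{prop:5.3} directly; the Green's formula of the corollary differs from that theorem only in that the middle term $\int_\Omega \Delta u \, v \, dx$ has been rewritten as $\int_\Omega \Delta u \, Ev \, dx$, with $E : H^1(\Omega)\to L^2(\Omega)$ the canonical embedding. The task, therefore, is just to justify that this rewriting is legitimate and then to invoke Theorem~\ref{prop:5.3} verbatim.

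First I would observe that, since $\Omega$ is a bounded Lipschitz domain, Lemma~\ref{lem:4.3} (applied with $s=1$ and $s'=0$) supplies a continuous (indeed compact) inclusion $E : H^1(\Omega)\hookrightarrow L^2(\Omega)$. Hence, for any $v\in H^1(\Omega)$, the element $Ev$ lies in $L^2(\Omega)$ and coincides with $v$ as a measurable function on $\Omega$; the operator $E$ simply makes the change of ambient space explicit, which is what allows the pairing against $\Delta u \in L^2(\Omega)$ to be read as an honest $L^2(\Omega)$ inner product.

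Next I would fix $u\in H_\Delta^1(\Omega)$, so that $\Delta u\in L^2(\Omega)$ by the very definition of that space, and fix an arbitrary $v\in H^1(\Omega)$. Applying Theorem~\ref{prop:5.3} to this pair yields
\begin{equation*}
\int_\Omega \nabla u \cdot \nabla v \, dx = -\int_\Omega \Delta u \, v \, dx + \langle \widehat{\partial_\nu} u, \Gamma_1 v \rangle,
\end{equation*}
where the middle integral on the right-hand side is the $L^2(\Omega)$-pairing of $\Delta u$ with $v$ regarded as an element of $L^2(\Omega)$; that is, with $Ev$. Substituting this identification produces the Green's formula claimed in the corollary.

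No genuine obstacle arises here, since the substantive content has already been established in Theorem~\ref{prop:5.3} and Lemma~\ref{lem:5.2}. The only point worth verifying is that $E$ is well defined on all of $H^1(\Omega)$ under the sole hypothesis that $\Omega$ is a bounded Lipschitz domain, and this is precisely what Lemma~\ref{lem:4.3} guarantees; the corollary is then a one-line rewriting of the theorem.
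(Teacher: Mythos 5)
Your proposal is correct and matches the paper exactly: the paper offers no separate proof, stating only that the corollary follows immediately from Theorem~\ref{prop:5.3} by reading the middle term as the $L^2(\Omega)$-pairing of $\Delta u$ with $Ev$. Your additional appeal to Lemma~\ref{lem:4.3} for the continuity of $E$ is harmless but not needed, since the inclusion $H^1(\Omega)\subset L^2(\Omega)$ is continuous directly from the definition of the norms.
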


In the rest of the paper, we denote by $\widehat{g}$ the embedding 
of an element $g\in L^2(\partial \Omega)$ in $H^{-\frac{1}{2}}(\partial \Omega)$.  
Now, consider the trace operator $\Gamma$ from $H_{\partial}^1(\Omega)$ 
to $L^2(\partial\Omega)$ and let 
$\Gamma ^* \in \mathcal B(L^2(\partial \Omega), H_{\partial}^1(\Omega))$ be its adjoint.
The following result characterizes $\Gamma^*$. 

\begin{theorem}
\label{thm:5.5}
For $g \in L^2(\partial \Omega)$, $\Gamma^*$ is the solution operator 
of the following Laplace equation with Robin boundary condition:
\begin{equation*}
\begin{cases}
\Delta z =0 & \text{  }  (\Omega), \\
\partial_{\nu}z+ \Gamma z= g & \text{}  (\partial \Omega),
\end{cases}
\end{equation*}
where $\partial_{\nu}$ is the normal derivative operator, considered as non-bounded, 
from $H_{\Delta}^1(\Omega)$ to $L^2(\partial \Omega)$.
\end{theorem}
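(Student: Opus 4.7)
The plan is to work directly from the definition of $\Gamma^{*}$ as the Hilbert adjoint with respect to the inner product $(\cdot,\cdot)_{\partial,\Omega}$ and then read off the PDE and boundary condition by testing against appropriate elements. Setting $z=\Gamma^{*}g$, the defining identity is
\begin{equation*}
\int_{\Omega}\nabla z\cdot\nabla u\,dx+\int_{\partial\Omega}\Gamma z\,\Gamma u\,d\sigma
=\int_{\partial\Omega} g\,\Gamma u\,d\sigma \qquad \forall\, u\in H^{1}(\Omega).
\end{equation*}

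First I would recover the interior equation. Restricting the test function to $u\in\mathcal{C}_{c}^{\infty}(\Omega)\subset H^{1}(\Omega)$, the boundary integrals drop because $\Gamma u=0$, leaving $\int_{\Omega}\nabla z\cdot\nabla u\,dx=0$. This says $\Delta z=0$ in $\mathscr{D}'(\Omega)$; in particular $\Delta z\in L^{2}(\Omega)$, so $z\in H_{\Delta}^{1}(\Omega)$, which puts us in the hypothesis of Corollary~\ref{prop:GF}.

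Next I would extract the Robin boundary condition. Applying the Green's formula of Corollary~\ref{prop:GF} to $z\in H_{\Delta}^{1}(\Omega)$ against an arbitrary $u\in H^{1}(\Omega)$, and using $\Delta z=0$, the bulk identity above rewrites as
\begin{equation*}
\bigl\langle \widehat{\partial_{\nu}}z,\Gamma_{1}u\bigr\rangle
+\bigl\langle \widehat{\Gamma z},\Gamma_{1}u\bigr\rangle
=\bigl\langle \widehat{g},\Gamma_{1}u\bigr\rangle,
\end{equation*}
where $\widehat{\cdot}$ denotes the canonical embedding $L^{2}(\partial\Omega)\hookrightarrow H^{-1/2}(\partial\Omega)$. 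Since $\Gamma_{1}:H^{1}(\Omega)\to H^{1/2}(\partial\Omega)$ is surjective (Gagliardo), the equality of these pairings against every $\Gamma_{1}u=\varphi\in H^{1/2}(\partial\Omega)$ forces $\widehat{\partial_{\nu}}z+\widehat{\Gamma z}=\widehat{g}$ in $H^{-1/2}(\partial\Omega)$. Because $g-\Gamma z\in L^{2}(\partial\Omega)$, this identity shows that the a priori distributional normal derivative $\widehat{\partial_{\nu}}z$ actually belongs to $L^{2}(\partial\Omega)$, so the condition can be read back as $\partial_{\nu}z+\Gamma z=g$ in $L^{2}(\partial\Omega)$, which is the Robin condition stated in the theorem.

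The only real subtlety is bookkeeping the identifications: $\Gamma$ here is $T_{1}\Gamma_{1}$, so $\Gamma u$ is both a trace and an $L^{2}$ boundary function, and the two surface integrals must be rewritten as $H^{-1/2}$/$H^{1/2}$ dualities via $\widehat{\cdot}$ to align with the Green's formula term $\langle\widehat{\partial_{\nu}}z,\Gamma_{1}u\rangle$. Once that identification is in place and the surjectivity of $\Gamma_{1}$ is invoked, the conclusion is immediate; I do not expect any genuine analytic obstruction beyond this careful matching of function spaces.
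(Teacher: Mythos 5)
Your proposal is correct and follows essentially the same route as the paper: derive the adjoint identity, test against functions vanishing on the boundary to get $\Delta z=0$ (the paper uses $v\in H_0^1(\Omega)=\mathcal N(\Gamma)$ where you use $\mathcal C_c^{\infty}(\Omega)$, which is equivalent), then apply the Green's formula of Corollary~\ref{prop:GF} and the surjectivity of $\Gamma_1$ onto $H^{1/2}(\partial\Omega)$ to identify $\widehat{\partial_{\nu}}z+\widehat{\Gamma z}=\widehat{g}$ and upgrade $\partial_{\nu}z$ to $L^2(\partial\Omega)$. Your explicit remark that $\Delta z=0$ places $z$ in $H_{\Delta}^1(\Omega)$, which is needed to invoke the Green's formula, is a useful detail the paper leaves implicit.
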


\begin{proof}
Let $g \in L^2(\partial \Omega)$ and $z=\Gamma^*g$. We have
\begin{equation}
\label{eq:star}
\begin{split}
\int_{\partial \Omega} g \ \Gamma v \  d \sigma 
&= (\Gamma^*g, v)_{ \partial ,\Omega}\\
&=  \int_{\Omega} \nabla z \nabla v dx + \int_{\partial \Omega} \Gamma z \Gamma v d \sigma, 
\end{split}
\end{equation}  
so that if $v \in  H^1_0(\Omega) ={\mathcal N}(\Gamma)$, then we obtain
$$  
\int_{ \Omega} \nabla z \nabla v \ dx = 0.
$$
Since the previous equality characterizes the $H^1$-harmonic functions, 
then 
$$ 
\Delta z = 0 \mbox{   in } \mathscr{D'}(\Omega).
$$
Applying Green's formula (Corollary~\ref{prop:GF}) 
to \eqref{eq:star}, we obtain that 
\begin{equation*}
\begin{split}
\int_{\Omega} \nabla z \nabla v dx 
+ \int_{\partial \Omega} \Gamma z \Gamma v d \sigma
&= \left<\widehat{ \partial }_{\nu}z, \Gamma_1 v\right>  
+ \int_{\partial \Omega} \Gamma z \Gamma v d \sigma \\
&=  \int_{\partial \Omega} g  \Gamma v d \sigma,
\end{split}
\end{equation*}
which leads to the following duality pairing on 
$H^{1/2}(\partial \Omega) \times H^{-1/2}(\partial \Omega)$: 
$$ 
\left<\widehat{\partial}_{\nu}z+  \widehat{\Gamma z}, \Gamma_1 v\right> 
=\left<\widehat{g},\Gamma _1 v\right>,
$$
where $\widehat{y}$ denotes the embedding of an element 
$y\in L^2(\partial \Omega)$ in $H^{-1/2}(\partial \Omega)$.
From $\mathcal R (\Gamma_1) = H^{1/2}(\partial \Omega)$, 
it follows that
$$
\widehat{ \partial} _{\nu}z + \widehat{\Gamma z} =\widehat{g}.
$$ 
Thus, $\widehat{ \partial} _{\nu}z = \widehat{g-\Gamma z}$
and, consequently, $\widehat{\partial}_{\nu} z$ belongs 
to the range of the embedding operator from $L^2(\partial \Omega)$ 
into $H^{-1/2}(\partial \Omega)$, which means that 
$\partial_{\nu}z \in L^2(\partial \Omega)$ and
$\partial_{\nu}z+ \Gamma z =g$. 
\end{proof}

Because the trace operator $\Gamma$ is bounded, one can
consider its Moore--Penrose inverse, which we denote by 
$\Lambda = \Gamma^\dagger \in {\mathcal C}(L^2(\partial\Omega), 
H_{\partial}^1(\Omega))$. 

\begin{theorem}
\label{thm:5.6}
Let $\Gamma$ be the trace operator and $\Lambda$ its Moore--Penrose inverse. 
Then, $\Lambda$ is the solution operator of the Dirichlet problem 
for the Laplace equation with data in 
$\mathcal D(\Lambda)= H^{\frac{1}{2}}(\partial \Omega)$. Moreover,
$$ 
{\mathcal D}(\Lambda)= {\mathcal R}(\Gamma),  \quad         
{\mathcal N}(\Lambda^*)= \mathcal N(\Gamma)= H_0^1(\Omega)
$$
and ${\mathcal R}(\Lambda)$ is characterized by
\begin{equation*}
\mathcal R(\Lambda)=
\left\{   v\in H^{1}(\Omega) ~~~/~~ \Delta v =0 
\ \ \mbox{in} \ \ \mathscr D '(\Omega) \right\}.
\end{equation*}
\end{theorem}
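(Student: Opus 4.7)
The plan is to unpack the definition of the Moore--Penrose inverse recalled at the start of Section~\ref{sec:03} and combine it with the features of $\Gamma \colon H^1_{\partial}(\Omega) \to L^2(\partial \Omega)$ already in hand: the Gagliardo surjectivity $\mathcal R(\Gamma) = H^{1/2}(\partial \Omega)$, and the injectivity of $\Gamma^*$ from Lemma~\ref{lem:5.1}.

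For the domain, the general definition reads $\mathcal D(\Lambda) = \mathcal R(\Gamma) \oplus \mathcal N(\Gamma^*)$; injectivity of $\Gamma^*$ eliminates the second summand, yielding at once $\mathcal D(\Lambda) = \mathcal R(\Gamma) = H^{1/2}(\partial \Omega)$. For $\mathcal N(\Lambda^*)$ I would use the standard adjoint identity $(\Gamma^{\dagger})^* = (\Gamma^*)^{\dagger}$ and re-apply the Moore--Penrose definition, now to $\Gamma^*$, to get $\mathcal N(\Lambda^*) = \mathcal N((\Gamma^*)^{\dagger}) = \mathcal N(\Gamma^{**}) = \mathcal N(\Gamma) = H^1_0(\Omega)$, the last equality being part of the description of $\Gamma_s$ recalled at the opening of Section~\ref{sec:05}.

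The heart of the argument is the characterization of $\mathcal R(\Lambda)$. For the inclusion ``$\subset$'', take $u = \Lambda g$ and $w \in \mathcal N(\Gamma) = H^1_0(\Omega)$. The orthogonality $\mathcal R(\Lambda) \perp \mathcal N(\Lambda^*)$, available since $\Lambda$ is closed and densely defined (density from Lemma~\ref{lem:4.2}), together with the identification just proved, gives $(u,w)_{\partial,\Omega} = 0$. Expanding this inner product and using $\Gamma w = 0$ reduces the identity to
\begin{equation*}
\int_{\Omega} \nabla u \cdot \nabla w\, dx = 0 \qquad \forall w \in H^1_0(\Omega),
\end{equation*}
which is precisely $\Delta u = 0$ in $\mathscr D'(\Omega)$. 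For the reverse inclusion, given $v \in H^1(\Omega)$ with $\Delta v = 0$, the trace $\Gamma v$ lies in $H^{1/2}(\partial \Omega) = \mathcal D(\Lambda)$, so $w := \Lambda \Gamma v$ is well defined, harmonic by what has just been shown, and satisfies $\Gamma w = \Gamma \Lambda (\Gamma v) = \Gamma v$ via the Moore--Penrose identity $\Gamma \Lambda \subset P_{\overline{\mathcal R(\Gamma)}}$ applied to $\Gamma v \in \mathcal R(\Gamma)$. Uniqueness of the $H^1$ Dirichlet problem on the bounded Lipschitz domain $\Omega$ (the difference $v-w$ is harmonic and in $H^1_0$, hence zero) then gives $v = w \in \mathcal R(\Lambda)$.

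The Dirichlet assertion is then a direct corollary: for $g \in H^{1/2}(\partial \Omega) = \mathcal D(\Lambda)$, the element $u = \Lambda g$ lies in $\mathcal R(\Lambda)$ and is hence harmonic, while $\Gamma u = \Gamma \Lambda g = P_{\overline{\mathcal R(\Gamma)}} g = g$ because $g \in \mathcal R(\Gamma)$. I expect the main obstacle to be the careful bookkeeping of domains for the unbounded operator $\Lambda$, in particular justifying the adjoint identity $(\Gamma^{\dagger})^* = (\Gamma^*)^{\dagger}$ and the projection identities in this closed densely defined (non-bounded) setting, rather than any of the Hilbert-space orthogonality or PDE uniqueness steps, which are classical.
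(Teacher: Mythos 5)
Your argument is correct and its core is the same as the paper's: identify $\mathcal D(\Lambda)=\mathcal R(\Gamma)$ from the injectivity of $\Gamma^*$, identify $\mathcal N(\Lambda^*)$ with $\mathcal N(\Gamma)=H^1_0(\Omega)$, and then test the inner product $(\cdot,\cdot)_{\partial,\Omega}$ against $w\in H^1_0(\Omega)$ (where $\Gamma w=0$) to conclude that $\int_\Omega\nabla(\Lambda g)\cdot\nabla w\,dx=0$, i.e.\ that $\Lambda g$ is harmonic with trace $g$. One genuine difference is worth noting: the paper's own proof stops there, which only yields the inclusion $\mathcal R(\Lambda)\subset\{v\in H^1(\Omega):\Delta v=0\ \text{in}\ \mathscr D'(\Omega)\}$, whereas the stated theorem claims equality. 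You supply the missing converse by taking a harmonic $v\in H^1(\Omega)$, forming $w=\Lambda\Gamma v$, using $\Gamma\Lambda\subset P_{\overline{\mathcal R(\Gamma)}}$ to see $\Gamma w=\Gamma v$, and invoking uniqueness of the $H^1$ Dirichlet problem to get $v=w\in\mathcal R(\Lambda)$; this is exactly the step needed to make the range characterization an equality. Your two auxiliary claims --- $(\Gamma^\dagger)^*=(\Gamma^*)^\dagger$ and $\overline{\mathcal R(\Lambda)}=\mathcal N(\Lambda^*)^\perp$ for the closed densely defined $\Lambda$ (note $\mathcal R(\Gamma)=H^{1/2}(\partial\Omega)$ is dense but not closed in $L^2(\partial\Omega)$, so $\Lambda$ is genuinely unbounded) --- are standard facts from the Labrousse framework the paper already cites, so flagging them as the only bookkeeping to verify is appropriate; alternatively $\mathcal N(\Lambda^*)=\mathcal R(\Lambda)^{\perp}=\bigl(\mathcal N(\Gamma)^{\perp}\bigr)^{\perp}=\mathcal N(\Gamma)$ avoids the adjoint identity altogether.
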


\begin{proof}  
Since $\Gamma$ is bounded, it follows that its Moore--Penrose inverse 
$\Lambda$ is closed and densely defined with closed range. 
Moreover, from Lemma~\ref{lem:5.1}, $\Gamma^*$ is injective, 
which implies that $\mathcal D(\Lambda)= \mathcal R(\Gamma)$.
Also, for $g \in \mathcal D(\Lambda)$, let $v=\Lambda g$. 
For $w \in {\mathcal D}(\Lambda^*)$, we have
\begin{equation*}
(v,w)_{\partial,\Omega}= \int_{\Omega} \nabla v \nabla w dx 
+ \int_{\partial \Omega} \Gamma v \Gamma w d\sigma 
= \left(\Lambda g,w\right)_{\partial,\Omega}
=\int_{\partial\Omega}g\Lambda^* w d\sigma,
\end{equation*}
so that if $w\in {\mathcal N}(\Lambda^*)= \mathcal N(\Gamma) =H^1_0(\Omega)$, 
then the following holds:
\begin{equation*}
\int_\Omega \nabla v\nabla w dx =0.
\end{equation*}
Since the previous equality holds for all $w \in H^1_0(\Omega)$ and  
characterizes the $H^1$-harmonic functions, it follows that
\begin{equation*}
\begin{cases}
\Delta v=0 & \text{  }  (\Omega) \\
\Gamma v =g & \text{}  (\partial \Omega).
\end{cases}
\end{equation*}
The proof is complete.
\end{proof} 

We now consider the embedding operator
$$
\begin{array}{ccccc}
E &: & H_{\partial}^1(\Omega) & \to & L^2(\Omega) \\
& & v & \mapsto & Ev, \\
\end{array}
$$
which maps each $v\in H^1(\Omega)$ to itself into $L^2(\Omega)$, 
obviously with different topologies. The space
$H^1(\Omega)$ is induced with the inner product 
$(\cdot,\cdot)_{\partial,\Omega}$ 
and $L^2(\Omega)$ with its usual inner product.
Consider also its adjoint operator $E^*$.
Since $H_0^1(\Omega) \subset \mathcal R(E)$ and $H_0^1(\Omega)$ is dense 
in $L^2(\Omega)$, it follows that $\mathcal R(E)$ is dense in $L^2(\Omega)$, 
which implies that $\mathcal N(E^*) =\{0\}$.  Also, since $\mathcal N(E)= \{0\}$, 
it follows that $\mathcal R(E^*)$ is dense in $H^1(\Omega)$.
An important characterization of $E^*$ is stated in the following result.

\begin{theorem}
\label{thm:5.7} 
Let $\Omega$ be a bounded Lipschitz domain of $\mathbb R^d$, $d\geq 2$, and $E$  
the embedding operator from $H_{\partial}^1 (\Omega)$ into $L^2(\Omega)$. Then, 
the adjoint operator $E^*$ is the solution operator of the following
Poisson equation with Robin boundary condition:
\begin{equation*}
\begin{cases}
-\Delta u =f & \text{  }  (\Omega) \\
\partial_{\nu}u+ \Gamma u=0& \text{}  (\partial \Omega),
\end{cases}
\end{equation*}
where $f\in L^2(\Omega)$.
\end{theorem}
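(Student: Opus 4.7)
The plan is to mirror the proof of Theorem~\ref{thm:5.5}: fix $f\in L^2(\Omega)$ and set $u=E^*f$, then derive the PDE and the Robin condition by testing the defining identity of $E^*$ against suitably chosen elements of $H^1_\partial(\Omega)$ and invoking Green's formula (Corollary~\ref{prop:GF}). Concretely, for every $v\in H^1_\partial(\Omega)$ the adjoint identity reads
\begin{equation*}
\int_\Omega \nabla u\,\nabla v\,dx + \int_{\partial\Omega}\Gamma u\,\Gamma v\,d\sigma
=(u,v)_{\partial,\Omega}=(E^*f,v)_{\partial,\Omega}=(f,Ev)_{0,\Omega}=\int_\Omega f\,v\,dx.
\end{equation*}

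First I would restrict the test functions to $v\in H^1_0(\Omega)=\mathcal N(\Gamma)$, so that the boundary integral disappears and the identity collapses to $\int_\Omega \nabla u\,\nabla v\,dx=\int_\Omega f\,v\,dx$ for every $v\in H^1_0(\Omega)$. This is precisely the weak formulation characterizing $-\Delta u=f$ in $\mathscr D'(\Omega)$, and in particular it yields $\Delta u = -f\in L^2(\Omega)$, so $u\in H^1_\Delta(\Omega)$ and the normal derivative $\widehat{\partial_\nu}u\in H^{-1/2}(\partial\Omega)$ from Theorem~\ref{prop:5.3} is available.

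Next I would return to the general identity and apply Green's formula (Corollary~\ref{prop:GF}) to the term $\int_\Omega\nabla u\,\nabla v\,dx$. Using $-\Delta u=f$ this gives
\begin{equation*}
\int_\Omega f\,Ev\,dx + \left\langle\widehat{\partial_\nu}u,\Gamma_1 v\right\rangle + \int_{\partial\Omega}\Gamma u\,\Gamma v\,d\sigma = \int_\Omega f\,v\,dx,
\end{equation*}
and, since $Ev=v$ pointwise in $\Omega$, the two volume integrals cancel. Rewriting the remaining boundary terms as a duality pairing on $H^{1/2}\times H^{-1/2}(\partial\Omega)$ by embedding $\Gamma u,\Gamma v\in L^2(\partial\Omega)$ into $H^{-1/2}(\partial\Omega)$ via the $\widehat{\,\cdot\,}$ convention, the equality becomes $\langle\widehat{\partial_\nu}u+\widehat{\Gamma u},\Gamma_1 v\rangle=0$ for all $v\in H^1(\Omega)$.

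The main (and only mildly subtle) obstacle is then to upgrade this $H^{-1/2}$-identity into a genuine $L^2$-Robin condition. Since $\mathcal R(\Gamma_1)=H^{1/2}(\partial\Omega)$, the pairing vanishes against every test in $H^{1/2}(\partial\Omega)$, so $\widehat{\partial_\nu}u=-\widehat{\Gamma u}$ in $H^{-1/2}(\partial\Omega)$. Because $\Gamma u\in L^2(\partial\Omega)$, this forces $\widehat{\partial_\nu}u$ to lie in the image of the embedding $L^2(\partial\Omega)\hookrightarrow H^{-1/2}(\partial\Omega)$; hence $\partial_\nu u\in L^2(\partial\Omega)$ and the identity reads $\partial_\nu u+\Gamma u=0$ on $\partial\Omega$. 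This completes the characterization of $u=E^*f$ as the solution of the Poisson--Robin problem, exactly as in the analogous step of the proof of Theorem~\ref{thm:5.5}.
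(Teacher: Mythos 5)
Your proposal is correct and follows essentially the same route as the paper's proof: identify $-\Delta u=f$ by testing against functions vanishing on the boundary (the paper uses $v\in\mathcal C_c^{\infty}(\Omega)$ directly, you use $v\in H^1_0(\Omega)=\mathcal N(\Gamma)$, which is equivalent), then apply Green's formula to the full adjoint identity, cancel the volume terms, use the surjectivity of $\Gamma_1$ onto $H^{1/2}(\partial\Omega)$ to get $\widehat{\partial_\nu}u+\widehat{\Gamma u}=0$ in $H^{-1/2}(\partial\Omega)$, and conclude $\partial_\nu u\in L^2(\partial\Omega)$ with $\partial_\nu u+\Gamma u=0$. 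No gaps; this matches the paper's argument step for step.
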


\begin{proof}
Let $ f\in L^2(\Omega)$ and $v\in H^1(\Omega)$. Putting $u=E^*f$, one has
\begin{equation}
\label{eq:9}
\int_{\Omega} f Ev dx = (E^*f,v)_{\partial,\Omega}
= \int_{\Omega} \nabla u \nabla v \  dx 
+ \int_{\partial \Omega} \Gamma u \ \Gamma v \ d \sigma.
\end{equation}
Now, if $v \in \mathcal{ C}_ c^{\infty}(\Omega)$, then
\begin{equation*}
\begin{split}
(E^*f,v)_{\partial,\Omega} 
&=  \int_{\Omega}f Ev dx\\
&=  \int_{\Omega} \nabla v \nabla u dx\\
&= \left< -\Delta u, v \right>_{\mathscr{ D'}(\Omega),\mathscr{ D}(\Omega)}.
\end{split}
\end{equation*}
Therefore, 
$$ 
f = -\Delta u  \quad \mbox{in} \quad \mathscr D'(\Omega).
$$
Applying Green's formula (Corollary~\ref{prop:GF}) to \eqref{eq:9}, one has
\begin{equation*}
\begin{split}
\int_{\Omega} f \ Ev  dx 
&= -\int_{\Omega}  Ev \ \Delta u  \ dx 
+ \left< \widehat{\partial_{\nu}} u , \Gamma_1  v \right> 
+ \int_{\partial \Omega} \Gamma v \Gamma u \ d \sigma \\
&=   \int_{\Omega} f \ Ev  dx 
+ \left< \widehat{\partial_{\nu}} u + \widehat{\Gamma u} , \Gamma_1 v \right>.
\end{split}
\end{equation*}
So far,
$$ 
\left< \widehat{\partial_{\nu}} u + \widehat{\Gamma u} , \Gamma_1 v \right> 
=0  \quad \forall v \in H^1(\Omega).
$$
Moreover, since $\mathcal R(\Gamma_1)= H^{1/2}(\partial \Omega)$, it follows that
\begin{equation*}
\widehat{\partial_{\nu}} u + \widehat{\Gamma u}   =0 
\quad  \mbox{in} \quad  H^{-1/2} (\partial \Omega).
\end{equation*}
Consequently, $\widehat{\partial _{\nu}} u$ belongs to the range 
of the embedding operator acting from $L^2(\partial \Omega)$ to 
$H^{-1/2}(\partial \Omega)$ and $\partial _{\nu} u \in L^2(\partial \Omega)$, 
which implies that 
$$ 
\partial_{\nu} u + \Gamma u  =0 \ \mbox{ in} \   L^2(\partial \Omega). 
$$
The proof is complete.
\end{proof} 


\section{\textbf{Functional characterizations of  
$\mathbf{H^s(\partial \Omega)}$, $\mathbf{0\leq s \leq 1}$}}
\label{sec:06}

Let $\Omega$ be a bounded Lipschitz domain of $\mathbb R^d$, $d\geq 2$, 
and $\Gamma$ the trace operator from $H_{\partial}^1(\Omega)$ to 
$L^2(\partial \Omega)$. Let $\Gamma$ be bounded and 
its Moore--Penrose inverse, denoted by $\Lambda$, 
be closed and densely defined. Moreover, let the operators 
$I+\Lambda\Lambda^*$  and  $I+\Lambda ^* \Lambda$  be 
positive self-adjoint in $H^1(\Omega)$ and  $L^2(\partial \Omega)$, 
respectively. Moreover, consider they are invertible and have bounded inverses. 
Then, it makes sense to speak of their powers of any fractional order.  
Our main goal, in this section, is to make  $H^s(\partial \Omega)$ 
into a Hilbert space using the family $(I+\Lambda^* \Lambda)^{-s} $  
for real $0\leq s\leq 1$. Denote 
$$ 
\mathcal H ^{s}(\partial\Omega)
=  \left\{ \ (I+\Lambda^* \Lambda)^{-s} g 
\ | \ g\in L^2(\partial \Omega) \right\}
$$ 
for $s\geq 0$.

\begin{lemma} 
\label{lem:useDT}
Let $\Gamma$ be the trace operator from $H_{\partial}^1(\Omega)$ into $L^2(\partial \Omega)$. 
Then, the algebraic equality $\mathcal R(\Gamma)= \mathcal H^{1/2}(\partial \Omega)$ holds.
\end{lemma}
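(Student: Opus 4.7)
The plan is to apply Douglas' theorem (Theorem~\ref{thm:DT}) to the two bounded operators $\Gamma$ and $(I+\Lambda^*\Lambda)^{-1/2}$, both having image in $L^2(\partial\Omega)$, in order to show that their ranges coincide. Since $\mathcal{H}^{1/2}(\partial\Omega)$ is by definition the range of $(I+\Lambda^*\Lambda)^{-1/2}$ applied to $L^2(\partial\Omega)$, the lemma will then follow at once.

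The central identity will come from item~(5) of Proposition~\ref{prop:3.1}. Since $\Lambda = \Gamma^{\dagger}$ and $\Gamma^*$ is injective by Lemma~\ref{lem:5.1}, setting $A = \Gamma$, $B = \Lambda$ in that item yields
\[
(I+\Gamma\Gamma^*)^{-1} + (I+\Lambda^*\Lambda)^{-1} = I
\]
on $L^2(\partial\Omega)$. Using that $\Gamma\Gamma^*$ and $(I+\Gamma\Gamma^*)^{-1}$ are commuting bounded self-adjoint operators, I would rewrite this as
\[
(I+\Lambda^*\Lambda)^{-1} = \Gamma\Gamma^*(I+\Gamma\Gamma^*)^{-1}.
\]

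From this single identity, the two operator inequalities required by Douglas' theorem drop out of the scalar functional calculus on the spectrum of $\Gamma\Gamma^*$. Since $\Gamma$ is bounded, $0 \leq \Gamma\Gamma^* \leq \|\Gamma\|^2\, I$, so the elementary inequality $(1+\|\Gamma\|^2)^{-1}\,t \leq t/(1+t) \leq t$ on $[0,\|\Gamma\|^2]$ transfers to
\[
(1+\|\Gamma\|^2)^{-1}\,\Gamma\Gamma^* \;\leq\; (I+\Lambda^*\Lambda)^{-1} \;\leq\; \Gamma\Gamma^*,
\]
equivalently $\Gamma\Gamma^* \leq (1+\|\Gamma\|^2)(I+\Lambda^*\Lambda)^{-1}$ together with $(I+\Lambda^*\Lambda)^{-1} \leq \Gamma\Gamma^*$. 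Applying Theorem~\ref{thm:DT} in each direction then produces both $\mathcal{R}(\Gamma) \subset \mathcal{R}((I+\Lambda^*\Lambda)^{-1/2})$ and the reverse inclusion, which is the desired equality.

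The main obstacle I anticipate is essentially bookkeeping rather than any deep difficulty: correctly matching the notation $A,B$ of Proposition~\ref{prop:3.1} with $\Gamma,\Lambda$, confirming that item~(5) is genuinely available because $\mathcal{N}(\Gamma^*) = \{0\}$ by Lemma~\ref{lem:5.1}, and noting that Theorem~\ref{thm:DT}, although stated in the paper for operators on a single Hilbert space, applies just as well to bounded operators sharing a common target Hilbert space (since only the operator inequality on that target side is used). Beyond these small points the proof reduces to the resolvent identity above combined with elementary functional calculus.
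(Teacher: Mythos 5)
Your proof is correct, but it follows a genuinely different route from the paper's. The paper obtains the inclusion $\mathcal R(\Gamma)\subset\mathcal H^{1/2}(\partial\Omega)$ from the explicit factorization $\Gamma=(I+\Lambda^*\Lambda)^{-1/2}T_{\Lambda^*}$ of Theorem~\ref{OR:03} (invoking Douglas' theorem in its factorization form, item~3), and the reverse inclusion from the identity $T_{\Lambda^*}\Lambda(I+\Lambda^*\Lambda)^{-1/2}=I$, which yields $(I+\Lambda^*\Lambda)^{-1/2}=\Gamma\Lambda(I+\Lambda^*\Lambda)^{-1/2}$. You instead start from the Labrousse identity (item~5 of Proposition~\ref{prop:3.1}, legitimately available since $\Gamma^*$ is injective by Lemma~\ref{lem:5.1}), rewrite it as $(I+\Lambda^*\Lambda)^{-1}=\Gamma\Gamma^*(I+\Gamma\Gamma^*)^{-1}$ --- which is precisely the identity the paper derives separately in Proposition~\ref{prop:usesDT} by a different manipulation --- and then feed the two-sided spectral estimate
\[
(1+\|\Gamma\|^2)^{-1}\,\Gamma\Gamma^*\;\leq\;(I+\Lambda^*\Lambda)^{-1}\;\leq\;\Gamma\Gamma^*
\]
into Douglas' theorem in its majorization form (item~2). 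Both directions check out: the functional calculus step is the pointwise inequality $(1+\|\Gamma\|^2)^{-1}t\leq t/(1+t)\leq t$ on $\sigma(\Gamma\Gamma^*)\subset[0,\|\Gamma\|^2]$, and your remark about Douglas' theorem applying to operators with a common target space is the same convention the paper itself uses. What your route buys is economy: it bypasses Theorem~\ref{OR:03} and the verification that $T_{\Lambda^*}$ has the stated Moore--Penrose inverse, and the two-sided inequality even gives (via item~(a) of Theorem~\ref{thm:DT}) norm bounds on the factors, hence slightly more than the algebraic equality claimed. What the paper's route buys is the explicit bounded factor $T_{\Lambda^*}$, which is reused later (notably in the norm-equivalence argument of Theorem~\ref{cor:usesDT}), so the extra machinery is not wasted there.
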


\begin{proof}
According to Theorem~\ref{OR:03}, $\Gamma$ has the following decomposition:
$$ 
\Gamma= (I+ \Lambda ^* \Lambda )^{-1/2} T_{\Lambda^*},
$$
where $T_{\Lambda^*}=\Lambda^*(I+\Lambda\Lambda^*)^{-1/2}
+\Gamma(I+\Lambda \Lambda^*)^{-\frac{1}{2}}$. Since 
$T_{\Lambda^*}$ is bounded, it follows by Douglas' theorem (see Theorem~\ref{thm:DT}) that
$$ 
\mathcal R(\Gamma) \subset \mathcal R( (I+ \Lambda ^*\Lambda )^{-1/2 })
=\mathcal H^{1/2}(\partial \Omega).
$$
On the other hand, according to Proposition~\ref{prop:3.2},
$T_{\Lambda^*}$ has a closed range and a unique 
bounded Moore--Penrose inverse $\Lambda (I+\Lambda ^* \Lambda)^{-1/2}$. 
Moreover, we have
$$  
\Gamma \Lambda (I+ \Lambda ^* \Lambda )^{-1/2} 
= (I+\Lambda ^* \Lambda)^{-1/2} T_{\Lambda^*} 
\Lambda (I+\Lambda ^* \Lambda)^{-1/2}.
$$
Now, in view of Proposition~\ref{OR:01}, we have
$$
T_{\Lambda^*} \Lambda (I+\Lambda ^* \Lambda)^{-1/2}= I_{L^2(\partial \Omega)},
$$
which implies that 
$$ 
(I+\Lambda ^* \Lambda)^{-1/2} = \Gamma \Lambda (I+\Lambda ^* \Lambda)^{-1/2}.
$$
Using again Douglas' theorem (Theorem~\ref{thm:DT}), we have 
$$
\mathcal R( (I+ \Lambda ^*\Lambda )^{-1/2}) \subset \mathcal R(\Gamma).
$$ 
Consequently, we obtain that
$$ 
\mathcal R(\Gamma) =\mathcal H^{1/2}(\partial \Omega). 
$$
The proof is complete.
\end{proof}

\begin{theorem} 
\label{thm:6.2}
The algebraic equality $H^{1/2}(\partial \Omega)
= \mathcal H^{1/2}(\partial \Omega)$ holds.
\end{theorem}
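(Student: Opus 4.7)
The plan is to deduce this from Lemma~\ref{lem:useDT} combined with the classical Gagliardo surjectivity of the trace operator that was recalled in Section~\ref{sec:05}. Both identifications are already in place, so the proof should be a one-line chain of equalities.

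More precisely, first I would recall that by Gagliardo's theorem (cited just before Lemma~\ref{lem:5.1}), the trace map $\Gamma = T_1 \Gamma_1$ from $H^1(\Omega)$ into $L^2(\partial\Omega)$ satisfies
\begin{equation*}
\mathcal{R}(\Gamma) = H^{1/2}(\partial\Omega),
\end{equation*}
where the equality is algebraic, i.e.\ the image of $\Gamma$ coincides as a set with $H^{1/2}(\partial\Omega)$ viewed as a subspace of $L^2(\partial\Omega)$. The passage between the topologies $\|\cdot\|_{1,\Omega}$ and $\|\cdot\|_{\partial,\Omega}$ on $H^1(\Omega)$ does not affect the range, since Nečas' theorem (invoked in Section~\ref{sec:05}) guarantees that these two norms are equivalent on the bounded Lipschitz domain $\Omega$.

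Then I would invoke Lemma~\ref{lem:useDT}, which establishes the algebraic equality
\begin{equation*}
\mathcal{R}(\Gamma) = \mathcal{H}^{1/2}(\partial\Omega),
\end{equation*}
obtained via the decomposition $\Gamma = (I+\Lambda^*\Lambda)^{-1/2} T_{\Lambda^*}$ from Theorem~\ref{OR:03} together with Douglas' theorem (Theorem~\ref{thm:DT}) applied in both directions. Chaining these two identities gives
\begin{equation*}
H^{1/2}(\partial\Omega) = \mathcal{R}(\Gamma) = \mathcal{H}^{1/2}(\partial\Omega),
\end{equation*}
which is the desired algebraic equality.

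There is essentially no obstacle here: all the substantive work has been done in Lemma~\ref{lem:useDT}. The only subtlety worth flagging in the write-up is that the equality in the theorem is \emph{algebraic} (set-theoretic), not topological; equivalence of the natural Hilbert norms on $H^{1/2}(\partial\Omega)$ and on $\mathcal{H}^{1/2}(\partial\Omega)$ is a separate statement, which presumably appears in the subsequent Proposition~\ref{prop:usesDT} or Theorem~\ref{cor:usesDT} by another application of Douglas' theorem (specifically, using the fact that $\mathcal{R}(\Gamma_1) \subset \mathcal{R}(T_{\Lambda^*}^{-1})$ and vice versa gives the two-sided norm comparison via item~(a) of Theorem~\ref{thm:DT}).
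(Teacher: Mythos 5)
Your proposal is correct and is essentially identical to the paper's own proof: both simply combine Gagliardo's identification $\mathcal{R}(\Gamma)=H^{1/2}(\partial\Omega)$ with the equality $\mathcal{R}(\Gamma)=\mathcal{H}^{1/2}(\partial\Omega)$ from Lemma~\ref{lem:useDT}. Your added remarks on the N\v{e}cas norm equivalence and on the equality being only algebraic are consistent with how the paper defers the topological statement to Theorem~\ref{cor:usesDT} and its corollaries.
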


\begin{proof} 
In view of Lemma~\ref{lem:useDT}, $\mathcal R(\Gamma)= \mathcal H^{1/2}(\partial \Omega)$,
whereas according to a result of Gagliardo \cite{Ga}, 
$\mathcal R(\Gamma)= H^{\frac{1}{2}}(\partial \Omega)$.
It follows that $ H^{1/2}(\partial \Omega)
= \mathcal H^{1/2}(\partial \Omega)$.
\end{proof}

\begin{proposition}
\label{prop:usesDT}
Let $\Gamma$ be the trace operator and $\Lambda$ its Moore--Penrose inverse.
Then, the algebraic equality 
$$   
{\mathcal R}(\Gamma \Gamma^*) = \mathcal H^1(\partial \Omega)
$$
holds.
\end{proposition}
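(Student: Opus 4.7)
The plan is to derive an explicit algebraic relation between $(I+\Lambda^{*}\Lambda)^{-1}$ and $\Gamma\Gamma^{*}$ from the Labrousse identities, and then invoke Douglas' theorem to convert this factorization into the equality of ranges. The key observation is that the hypotheses of Proposition~\ref{prop:3.1}(5) are satisfied in the present setting: $\Gamma$ is bounded (hence closed and densely defined on all of $H^{1}_{\partial}(\Omega)$), its Moore--Penrose inverse is $\Lambda$ by definition, and Lemma~\ref{lem:5.1} guarantees that $\Gamma^{*}$ is injective, i.e.\ $\mathcal{N}(\Gamma^{*})=\{0\}$. Applying item~(5) of Proposition~\ref{prop:3.1} with $A=\Gamma$ and $B=\Lambda$ then yields the identity
\begin{equation*}
(I+\Gamma\Gamma^{*})^{-1}+(I+\Lambda^{*}\Lambda)^{-1}=I_{L^{2}(\partial\Omega)}.
\end{equation*}

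Rearranging and observing that $I-(I+X)^{-1}=X(I+X)^{-1}$ for any bounded positive $X$, this gives the compact formula
\begin{equation*}
(I+\Lambda^{*}\Lambda)^{-1}=\Gamma\Gamma^{*}(I+\Gamma\Gamma^{*})^{-1}.
\end{equation*}
Since $\Gamma\Gamma^{*}$ is bounded, self-adjoint and positive on $L^{2}(\partial\Omega)$, the operator $I+\Gamma\Gamma^{*}$ is bounded below by $I$ and therefore has a bounded inverse on $L^{2}(\partial\Omega)$. In particular $(I+\Gamma\Gamma^{*})^{-1}\in\mathcal{B}(L^{2}(\partial\Omega))$, so this is a genuine factorization of $(I+\Lambda^{*}\Lambda)^{-1}$ through $\Gamma\Gamma^{*}$ by a bounded operator.

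With the identity in hand, two applications of Douglas' theorem (Theorem~\ref{thm:DT}) close the argument. The factorization $(I+\Lambda^{*}\Lambda)^{-1}=\Gamma\Gamma^{*}\cdot(I+\Gamma\Gamma^{*})^{-1}$ immediately gives
$$
\mathcal{R}\bigl((I+\Lambda^{*}\Lambda)^{-1}\bigr)\subset\mathcal{R}(\Gamma\Gamma^{*}).
$$
For the reverse inclusion, multiplying the identity by the bounded operator $I+\Gamma\Gamma^{*}$ on the right yields the dual factorization $\Gamma\Gamma^{*}=(I+\Lambda^{*}\Lambda)^{-1}(I+\Gamma\Gamma^{*})$, so Douglas' theorem again gives $\mathcal{R}(\Gamma\Gamma^{*})\subset\mathcal{R}((I+\Lambda^{*}\Lambda)^{-1})=\mathcal{H}^{1}(\partial\Omega)$.

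The main potential obstacle is simply checking that the framework of Proposition~\ref{prop:3.1}(5) applies here, that is, that $\Gamma$ and $\Lambda$ really do form an operator/Moore--Penrose pair in the Labrousse sense and that $\mathcal{N}(\Gamma^{*})=\{0\}$. The former is built into the standing assumptions of Section~\ref{sec:06}, while the latter is precisely the content of Lemma~\ref{lem:5.1}. Everything else is algebraic rearrangement and two direct invocations of Theorem~\ref{thm:DT}.
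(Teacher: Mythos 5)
Your proof is correct, and it reaches the paper's central identity $(I+\Lambda^{*}\Lambda)^{-1}=\Gamma\Gamma^{*}(I+\Gamma\Gamma^{*})^{-1}$ by a slightly different route. The paper starts from item~3 of Proposition~\ref{prop:3.1}, namely $\Gamma^{*}(I+\Gamma\Gamma^{*})^{-1}=\Lambda(I+\Lambda^{*}\Lambda)^{-1}$, and then composes on the left with $\Gamma$; this forces it to handle the unbounded operator $\Lambda$ explicitly, invoking $\Gamma\Lambda\subset I_{L^{2}(\partial\Omega)}$ together with the domain inclusion $\mathcal{R}\bigl((I+\Lambda^{*}\Lambda)^{-1}\bigr)\subset\mathcal{D}(\Lambda)$. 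You instead use item~5 (equivalently, item~4 with the projection term killed by the injectivity of $\Gamma^{*}$, which is exactly Lemma~\ref{lem:5.1}), so that the identity $(I+\Gamma\Gamma^{*})^{-1}+(I+\Lambda^{*}\Lambda)^{-1}=I$ and the elementary rearrangement $I-(I+X)^{-1}=X(I+X)^{-1}$ keep the whole computation inside the bounded operators on $L^{2}(\partial\Omega)$. What your version buys is the avoidance of the domain bookkeeping for $\Lambda$; what it costs is the explicit dependence on the injectivity of $\Gamma^{*}$, which the paper's route does not need at this step (though it is available anyway). From the identity onward the two arguments coincide: the two factorizations through bounded operators and the two applications of Douglas' theorem (Theorem~\ref{thm:DT}) are exactly those of the paper.
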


\begin{proof}
According to the third item of Proposition~\ref{prop:3.1}, we have 
$$ 
\Gamma^* (I+\Gamma \Gamma^*) ^{-1} = \Lambda (I+\Lambda^* \Lambda)^{-1}.
$$
Composing with $\Gamma$, we obtain that 
$$ 
\Gamma \Gamma^* (I+\Gamma \Gamma ^*) ^{-1} 
= \Gamma \Lambda (I+\Lambda^* \Lambda)^{-1}
$$
and, since $\Gamma \Lambda \subset I_{L^2(\partial \Omega)}$ 
and $\mathcal R((I+ \Lambda^* \Lambda )^{-1})\subset \mathcal D(\Lambda)$,  
we obtain that
$$ 
\Gamma \Gamma^* (I+ \Gamma \Gamma^*)^{-1} =(I+ \Lambda^* \Lambda )^{-1}.
$$
Therefore, by Douglas' theorem (Theorem~\ref{thm:DT}),  
$$
\mathcal R((I+ \Lambda^* \Lambda )^{-1}) \subset \mathcal R(\Gamma \Gamma^*).
$$
On the other hand, since $(I+\Gamma \Gamma^*)^{-1}$ is bounded 
and has a bounded inverse, it follows that  
$$ 
\Gamma \Gamma^* =  (I+ \Lambda^* \Lambda )^{-1} (I+ \Gamma \Gamma^*),
$$ 
which implies that 
$$
\mathcal R(\Gamma \Gamma^*)\subset \mathcal R((I+ \Lambda^* \Lambda )^{-1}).
$$ 
Hence, $\mathcal R(\Gamma\Gamma^*) = \mathcal H^1(\partial \Omega)$. 
\end{proof} 
 
The following result is a generalization of the classical theorem 
of Ne\v{c}as proved by Mclean in \cite{Mc}. 
This version will prove to be 
useful to characterize $H^1(\partial \Omega)$.
 
\begin{theorem}[See \cite{Mc}]
\label{thm:5.1}
Let $\Omega$ be a bounded Lipschitz domain of $\mathbb R^d$ 
and $u\in H _{\Delta}^1(\Omega)$.
\begin{enumerate}
\item If $\partial_{\nu} u \in L^2(\partial \Omega)$, 
then $\Gamma u \in H^1(\partial \Omega)$ and there exists 
a constant $c_{\Omega}>0$, depending on the geometry of $\Omega$, such that
$$ 
\| \Gamma u \|_ {1,\partial \Omega} \leq c _{\Omega} 
\  ( \|u\|_{1, \Omega}^2 + \|\Delta u\| _{0,\Omega}^2 
+ \| \partial  _{\nu}u \| _{0,\partial \Omega}^2)^{1/2}.
$$

\item If $\Gamma u \in H^1(\partial \Omega)$, then 
$\partial_{\nu} u \in L^2(\partial \Omega)$ and there exists 
a constant $c_{\Omega}^{\prime}>0$, depending on the geometry 
of $\Omega$, such that
$$ 
\| \partial_{\nu} u \|_ {0,\partial \Omega} \leq c_{\Omega}^{\prime} 
\  \left( \|u\|_{1, \Omega}^2 +\|\Delta u\| _{0,\Omega}^2 
+ \| \Gamma u\| _{1,\partial \Omega}^2\right)^{1/2}.
$$
\end{enumerate}
\end{theorem}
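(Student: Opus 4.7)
The approach is the classical one of Ne\v cas via a Rellich-type identity, adapted to Lipschitz boundaries by approximation. First, using a finite cover of $\partial\Omega$ by the chart neighborhoods $Q_{\delta,\delta'}(x)$ of Definition~\ref{def:LCB} and a subordinate partition of unity, I would reduce both estimates to a local setting where the relevant piece of boundary is the graph of a single Lipschitz function $\gamma_x$; cutoff terms supported away from $\partial\Omega$ contribute only to the $\|u\|_{1,\Omega}$ portion of the right-hand sides.

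In each local chart I would select a smooth vector field $\alpha \in (\mathcal C^{\infty}(\overline\Omega))^d$ supported in the chart and satisfying $\alpha\cdot\nu \geq c_0 > 0$ almost everywhere on the boundary piece; this is possible because the Lipschitz graph property forces $\nu$ to lie in a cone about a fixed direction. Assuming first that $u\in \mathcal C^2(\overline\Omega)$, I would apply the Rellich identity
\begin{equation*}
\mathrm{div}\bigl(\alpha|\nabla u|^2 - 2(\alpha\cdot\nabla u)\nabla u\bigr)
= (\mathrm{div}\,\alpha)|\nabla u|^2 - 2\sum_{i,j}(\partial_i\alpha_j)(\partial_i u)(\partial_j u) - 2(\alpha\cdot\nabla u)\Delta u,
\end{equation*}
integrate over $\Omega$, and use the divergence theorem. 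Decomposing $\nabla u = (\partial_\nu u)\,\nu + \nabla_T u$ on $\partial\Omega$, where $\nabla_T u$ denotes the surface gradient of $\Gamma u$, the boundary integral becomes
\begin{equation*}
\int_{\partial\Omega}(\alpha\cdot\nu)\bigl(|\nabla_T u|^2 - (\partial_\nu u)^2\bigr)\,d\sigma - 2\int_{\partial\Omega}(\alpha\cdot\nabla_T u)(\partial_\nu u)\,d\sigma.
\end{equation*}

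Since $\alpha\cdot\nu \geq c_0$, a Cauchy--Schwarz absorption of the cross term would yield, on one hand,
\begin{equation*}
\int_{\partial\Omega}|\partial_\nu u|^2\,d\sigma \leq C\Bigl(\int_{\partial\Omega}|\nabla_T u|^2\,d\sigma + \|u\|_{1,\Omega}^2 + \|\Delta u\|_{0,\Omega}^2\Bigr),
\end{equation*}
and, on the other hand, the symmetric inequality with the roles of $\partial_\nu u$ and $\nabla_T u$ exchanged. Because $\|\Gamma u\|_{1,\partial\Omega}^2$ is comparable to $\|\Gamma u\|_{0,\partial\Omega}^2 + \|\nabla_T u\|_{0,\partial\Omega}^2$ and $\|\Gamma u\|_{0,\partial\Omega} \lesssim \|u\|_{1,\Omega}$ by the ordinary trace theorem, both items (1) and (2) of the statement would follow at once. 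To remove the smoothness assumption on $u$ and $\partial\Omega$, I would approximate $\Omega$ by a nested sequence of $\mathcal C^{\infty}$ subdomains $\Omega_n \uparrow \Omega$ with uniformly bounded Lipschitz characters (the Ne\v cas construction, see \cite{N}), approximate $u$ on each $\Omega_n$ by smooth functions, and pass to the limit using the continuity of $\Gamma_1$ and of $\widehat\partial_\nu$ from Theorem~\ref{prop:5.3}.

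The main obstacle is the low regularity of the outward unit normal on a Lipschitz boundary: $\nu \in L^{\infty}(\partial\Omega)$ only, so the pointwise decomposition of $\nabla u$ on $\partial\Omega$ and the tangential gradient $\nabla_T u$ exist merely almost everywhere, and the Rellich identity survives the limiting process only because the Lipschitz constants of $\Omega_n$ remain uniformly bounded. Making the convergence of the boundary integrals quantitative, with constants independent of $n$ depending solely on the local Lipschitz-graph data of $\partial\Omega$ from Definition~\ref{def:LCB}, is the delicate step.
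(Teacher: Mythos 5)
The paper does not prove this theorem at all: it is imported verbatim from McLean \cite{Mc} (as a Lipschitz-domain generalization of Ne\v cas's lemma), so there is no in-paper argument to compare against. Your sketch follows exactly the route taken in the cited sources -- localization to Lipschitz graphs, a vector field $\alpha$ with $\alpha\cdot\nu\geq c_0>0$, the Rellich identity, and absorption of the cross term $\int(\alpha\cdot\nabla_T u)(\partial_\nu u)\,d\sigma$ -- and the identity and the resulting boundary expression are stated correctly, so the plan is the right one.

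That said, what you have is an a priori estimate for smooth $u$ plus an acknowledgement that the limiting argument is delicate; the qualitative implications are precisely where the content of the theorem lies, and they do not follow from the estimate alone. For item (1) you must produce $\nabla_T u\in L^2(\partial\Omega)$ for a general $u\in H^1_\Delta(\Omega)$ with $\partial_\nu u\in L^2(\partial\Omega)$, and a generic smooth approximation of such a $u$ will not have $\partial_\nu u_n\to\partial_\nu u$ in $L^2(\partial\Omega)$ (a priori $\partial_\nu u$ only lives in $H^{-1/2}(\partial\Omega)$). The standard repair is to split $u=u_0+h$ with $u_0\in H^1_0(\Omega)$ solving $-\Delta u_0=-\Delta u$ and $h$ harmonic, treat $u_0$ by interior $H^2$ regularity restricted to the approximating domains $\Omega_n$, and treat $h$ by the harmonic-function machinery (nontangential maximal functions, uniform Rellich estimates on $\partial\Omega_n$, weak-$*$ compactness of the boundary gradients). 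Your sketch names the obstacle but does not carry out this decomposition, so as written it establishes the inequalities only for $u$ regular enough that both boundary integrals already make sense; completing it to the stated generality would essentially reproduce the proof in \cite{Mc} or \cite{N}.
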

 
For $f\in L^2(\Omega)$, let us consider the solution operator $E_0^*$  
of the following Poisson equation with Dirichlet boundary condition: 
\begin{equation}
\label{eq:11}
\begin{cases}
-\Delta u^0=f & \text{  }  (\Omega) \\
\Gamma u ^0=0 & \text{}  (\partial \Omega).
\end{cases}
\end{equation}
An interesting consequence of Theorem~\ref{thm:5.1} 
is the classical Rellich--Ne\v{c}as lemma:
 
\begin{corollary}[The Rellich--Ne\v{c}as lemma -- see, e.g., \cite{N}]
Let $f\in L^2(\Omega)$ and $u^0=E_0^*f$ be the solution of the Dirichlet problem 
for the Poisson equation \eqref{eq:11}. Then, $\partial _{\nu} u^0 \in L^2(\partial \Omega)$. 
Moreover, there exists a constant $c_{\Omega}>0$, depending on the geometry of $\Omega$, such that
$$  
\|\partial_{\nu} u^0 \|_{0,\partial \Omega} \leq  c_{\Omega} \ \|f\|_{0,\Omega}.
$$
\end{corollary}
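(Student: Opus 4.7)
The plan is to reduce the statement to a direct application of item~2 of Theorem~\ref{thm:5.1}, using that the Dirichlet datum is zero and that the Laplacian equals $-f$ in $L^2(\Omega)$. Concretely, for $u^0 = E_0^* f$ I would first observe that $u^0 \in H_0^1(\Omega) \subset H^1(\Omega)$, that $\Delta u^0 = -f \in L^2(\Omega)$ (so $u^0 \in H^1_\Delta(\Omega)$), and that $\Gamma u^0 = 0$ belongs to $H^1(\partial\Omega)$ with $\|\Gamma u^0\|_{1,\partial\Omega} = 0$. This places $u^0$ squarely in the hypothesis of the second item of Theorem~\ref{thm:5.1}, which delivers $\partial_\nu u^0 \in L^2(\partial\Omega)$ together with the estimate
\begin{equation*}
\|\partial_\nu u^0\|_{0,\partial\Omega}
\leq c_\Omega' \left(\|u^0\|_{1,\Omega}^2 + \|\Delta u^0\|_{0,\Omega}^2 + \|\Gamma u^0\|_{1,\partial\Omega}^2\right)^{1/2}
= c_\Omega' \left(\|u^0\|_{1,\Omega}^2 + \|f\|_{0,\Omega}^2\right)^{1/2}.
\end{equation*}

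The remaining step is a standard $H^1$ a priori estimate for the homogeneous Dirichlet problem: testing $-\Delta u^0 = f$ against $u^0 \in H_0^1(\Omega)$ yields $\|\nabla u^0\|_{0,\Omega}^2 = (f, u^0)_{0,\Omega}$, and the Poincaré inequality on the bounded Lipschitz domain $\Omega$ gives $\|u^0\|_{0,\Omega} \leq c_P \|\nabla u^0\|_{0,\Omega}$. Combining Cauchy--Schwarz with Poincaré produces $\|u^0\|_{1,\Omega} \leq C_\Omega \|f\|_{0,\Omega}$ for some $C_\Omega$ depending only on the geometry of $\Omega$. Substituting this into the previous display and absorbing constants yields the desired bound $\|\partial_\nu u^0\|_{0,\partial\Omega} \leq c_\Omega \|f\|_{0,\Omega}$.

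The real work is already encoded in Theorem~\ref{thm:5.1}, whose second item is the nontrivial Nečas-type trace regularity statement; my argument is just a clean specialization. The only delicate points are verifying the hypotheses (a trivial check once one notices that $\Gamma u^0 = 0$ makes the $H^1(\partial\Omega)$-norm vanish) and invoking the Poincaré inequality, which is available precisely because $u^0 \in H_0^1(\Omega)$ and $\Omega$ is bounded. No further appeal to the machinery of Sections~\ref{sec:03}--\ref{sec:05} is needed beyond Theorem~\ref{thm:5.1} itself.
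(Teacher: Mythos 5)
Your argument is correct and is exactly the derivation the paper intends: the corollary is stated as a consequence of Theorem~\ref{thm:5.1} with no written proof, and your specialization of its second item to $\Gamma u^0=0$, combined with the standard energy estimate $\|u^0\|_{1,\Omega}\leq C_\Omega\|f\|_{0,\Omega}$ from testing against $u^0\in H_0^1(\Omega)$ and Poincar\'e, fills in precisely that implicit step.
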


The next two theorems give new characterizations of $H^1(\partial \Omega)$.

\begin{theorem} 
\label{thm:6.6}
Let $\Gamma$ be the trace operator from $H_{\partial}^1(\Omega)$ 
into $L^2(\partial \Omega)$. Then,
$$ 
H^1(\partial \Omega) 
= {\mathcal R}((I+\Lambda^*\Lambda)^{-1}) 
= \mathcal R(\Gamma \Gamma^*).
$$
\end{theorem}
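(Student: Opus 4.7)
The plan is to prove the two equalities separately. The second identity $\mathcal R((I+\Lambda^*\Lambda)^{-1}) = \mathcal R(\Gamma\Gamma^*)$ is essentially already in hand: by the very definition of $\mathcal H^1(\partial\Omega)$ (as the range of $(I+\Lambda^*\Lambda)^{-1}$) combined with Proposition~\ref{prop:usesDT}, which asserts $\mathcal R(\Gamma\Gamma^*) = \mathcal H^1(\partial\Omega)$. Hence only the first equality $H^1(\partial\Omega) = \mathcal R(\Gamma\Gamma^*)$ requires substantive argument, and I would prove it by a double inclusion using the machinery of Section~\ref{sec:05}.

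For $\mathcal R(\Gamma\Gamma^*) \subset H^1(\partial\Omega)$, I take $g \in L^2(\partial\Omega)$ and set $z = \Gamma^* g$. By Theorem~\ref{thm:5.5}, $z \in H_{\Delta}^1(\Omega)$, $\Delta z = 0$, and $\partial_{\nu} z + \Gamma z = g$, so $\partial_{\nu} z = g - \Gamma z \in L^2(\partial\Omega)$. Item~(1) of Ne\v{c}as's refinement (Theorem~\ref{thm:5.1}) then upgrades $\Gamma z$ to $H^1(\partial\Omega)$, and since $\Gamma\Gamma^* g = \Gamma z$, the inclusion follows.

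For the reverse inclusion $H^1(\partial\Omega) \subset \mathcal R(\Gamma\Gamma^*)$, given $h \in H^1(\partial\Omega) \subset H^{1/2}(\partial\Omega) = \mathcal R(\Gamma) = \mathcal D(\Lambda)$, I form the harmonic extension $v = \Lambda h$; by Theorem~\ref{thm:5.6}, $v \in H^1(\Omega)$ with $\Delta v = 0$ in $\mathscr D'(\Omega)$ and $\Gamma v = h$. In particular $v \in H_{\Delta}^1(\Omega)$, so item~(2) of Theorem~\ref{thm:5.1} applied to $v$ yields $\partial_{\nu} v \in L^2(\partial\Omega)$. Setting $g := \partial_{\nu} v + h \in L^2(\partial\Omega)$, Theorem~\ref{thm:5.5} identifies $v$ as the Robin solution with datum $g$, that is, $v = \Gamma^* g$. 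Consequently $h = \Gamma v = \Gamma\Gamma^* g \in \mathcal R(\Gamma\Gamma^*)$.

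The only genuine obstacle is verifying the hypotheses needed to invoke Theorem~\ref{thm:5.1} on each side, namely that the relevant $z$ and $v$ belong to $H_{\Delta}^1(\Omega)$; since both are harmonic, this reduces to $0 \in L^2(\Omega)$, which is automatic. The bulk of the proof is therefore bookkeeping that splices together the three characterizations available from Section~\ref{sec:05}: $\Gamma^*$ as the Robin solution operator, $\Lambda$ as the Dirichlet extension operator, and the sharp Ne\v{c}as equivalence between $\partial_{\nu} u \in L^2(\partial\Omega)$ and $\Gamma u \in H^1(\partial\Omega)$ for harmonic $u$.
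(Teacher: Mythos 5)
Your proposal is correct and follows essentially the same route as the paper: the second identity is delegated to Proposition~\ref{prop:usesDT}, the inclusion $\mathcal R(\Gamma\Gamma^*)\subset H^1(\partial\Omega)$ uses the Robin characterization of $\Gamma^*$ (Theorem~\ref{thm:5.5}) plus item~(1) of Theorem~\ref{thm:5.1}, and the reverse inclusion uses the harmonic (Dirichlet) extension plus item~(2) and the identification $v=\Gamma^*(\partial_\nu v+\Gamma v)$. The only cosmetic difference is that you name the extension explicitly as $\Lambda h$ via Theorem~\ref{thm:5.6}, where the paper simply invokes the variational solution of the Dirichlet problem.
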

 
\begin{proof}
Let $g\in L^2(\partial \Omega)$, and $z= \Gamma^* g$ 
be the solution of the following Laplace 
equation with Robin boundary condition:
\begin{equation*}
\begin{cases}
\Delta z=0 & \text{  }  (\Omega) \\
\partial_{\nu} z + \Gamma z =g & \text{}  (\partial \Omega).
\end{cases}
\end{equation*}
Since $z\in H^1(\Omega)$ and $\Delta z= 0$ in $\Omega$,  
then $z\in H_{\Delta}^1(\Omega)$. Because $\partial _{\nu} z 
= g-\Gamma z \in L^2(\partial \Omega)$, 
by applying the first item of Theorem~\ref{thm:5.1}, 
$\Gamma z= \Gamma \Gamma^* g \in H^1(\partial \Omega)$, 
which implies that $\mathcal R(\Gamma \Gamma^*) \subset H^1(\partial \Omega)$.
Now, let $g\in H^1(\partial \Omega)$. The inclusion $H^{1}(\partial \Omega) 
\subset H^{1/2}(\partial \Omega)$ assures the existence and uniqueness 
of the variational solution of the Dirichlet problem for the Laplace equation
\begin{equation*}
\begin{cases}
\Delta v=0 & \text{  }  (\Omega) \\
\Gamma v =g & \text{}  (\partial \Omega).
\end{cases}
\end{equation*}
Since $v\in H_{\Delta}^1(\Omega)$ and $\Gamma v \in H^1(\partial \Omega)$, 
the second item of Theorem~\ref{thm:5.1} implies that $\partial_{\nu} v 
\in L^2(\partial \Omega)$. Putting $y=\partial_{\nu} v + \Gamma v$, 
it follows that $v=\Gamma ^* y$ and $g= \Gamma \Gamma^* y$. Thus, 
we deduce that $g\in \mathcal H^1(\partial \Omega)$. 
This establishes the second inclusion.
\end{proof} 

\begin{theorem}
\label{prop2:usesDT}
Let $U$ be the embedding operator from $H^1(\partial \Omega)$ 
into $L^2(\partial \Omega)$ and $V$ its inverse. Then, 
$$
H^1(\partial \Omega) = \mathcal R((I+V^*V)^{-1/2}).
$$
\end{theorem}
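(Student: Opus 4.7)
The plan is to transpose the Douglas-factorization argument used to prove Lemma~\ref{lem:useDT} (and hence Theorem~\ref{thm:6.2}) from the pair $(\Gamma,\Lambda)$ to the pair $(U,V)$. The operator $U$ is bounded because the embedding $H^1(\partial\Omega)\hookrightarrow L^2(\partial\Omega)$ is continuous; it is injective, and by Lemma~\ref{lem:4.2} its range $H^1(\partial\Omega)$ is dense in $L^2(\partial\Omega)$, so $\mathcal N(U^*)=\{0\}$ and the inverse $V$ coincides with the Moore--Penrose inverse $U^{\dagger}$ with $\mathcal D(V)=\mathcal R(U)=H^1(\partial\Omega)$. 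Hence Theorems~\ref{OR:02} and \ref{OR:03} apply with $A=U$, $B=V$, and the statement to be proved is nothing other than the algebraic equality $\mathcal R(U)=\mathcal R((I+V^*V)^{-1/2})$.

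First, I would invoke Theorem~\ref{OR:03} to write the decomposition
\begin{equation*}
U=(I+V^*V)^{-1/2}\,T_{V^*},\qquad T_{V^*}=V^*(I+VV^*)^{-1/2}+U(I+VV^*)^{-1/2},
\end{equation*}
with $T_{V^*}$ bounded on $L^2(\partial\Omega)$. Douglas' Theorem~\ref{thm:DT} then gives immediately
\begin{equation*}
H^1(\partial\Omega)=\mathcal R(U)\subset\mathcal R((I+V^*V)^{-1/2}),
\end{equation*}
which is half the claim.

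For the opposite inclusion I would copy the argument of Lemma~\ref{lem:useDT}. Theorem~\ref{OR:02} applied to the pair $(U,V)$ shows that $V^*(I+VV^*)^{-1/2}$ has closed range with bounded Moore--Penrose inverse, and one checks (using the von Neumann-type identities $(I+VV^*)^{-1/2}V\subset V(I+V^*V)^{-1/2}$ together with $UV\subset I_{L^2(\partial\Omega)}$, and item 2 of Proposition~\ref{prop:3.1}) that
\begin{equation*}
T_{V^*}\,V(I+V^*V)^{-1/2}=I_{L^2(\partial\Omega)}.
\end{equation*}
Multiplying the decomposition of $U$ on the right by $V(I+V^*V)^{-1/2}$ and using this identity, I obtain
\begin{equation*}
(I+V^*V)^{-1/2}=U\cdot V(I+V^*V)^{-1/2}.
\end{equation*}
Because $V(I+V^*V)^{-1/2}$ is bounded (von Neumann), one more application of Douglas' theorem yields $\mathcal R((I+V^*V)^{-1/2})\subset\mathcal R(U)=H^1(\partial\Omega)$, finishing the proof.

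The main obstacle is the bookkeeping around the unbounded operator $V$: one has to verify that $(I+V^*V)^{-1}$ lands inside $\mathcal D(V)$, that the formal identities $(I+VV^*)^{-1/2}V\subset V(I+V^*V)^{-1/2}$ and $V^*(I+VV^*)^{-1}=U(I+U^*U)^{-1}$ (Proposition~\ref{prop:3.1}) can be chained legally, and that $UV$ behaves as the identity precisely where it appears. Once those domain issues are handled, the telescoping $V^*V(I+V^*V)^{-1}+(I+V^*V)^{-1}=I$ closes the identity $T_{V^*}V(I+V^*V)^{-1/2}=I$ and the argument of Lemma~\ref{lem:useDT} transfers verbatim.
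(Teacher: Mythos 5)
Your proposal is correct and follows essentially the same route as the paper: both decompose $U=(I+V^*V)^{-1/2}T_{V^*}$ via Theorem~\ref{OR:03} and apply Douglas' theorem in both directions. The only cosmetic difference is that the paper gets the reverse inclusion by noting that $V$ and $V^*$ are injective and surjective, so that $T_{V^*}$ is an isomorphism and $UT_{V^*}^{-1}=(I+V^*V)^{-1/2}$, whereas you exhibit the same bounded inverse explicitly as $V(I+V^*V)^{-1/2}$ by mimicking Lemma~\ref{lem:useDT}.
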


\begin{proof}
We have $\mathcal D(V) = \mathcal R(U)= H^1(\partial \Omega)$ and, 
by application of our Theorem~\ref{OR:03}, $U$ has the  
decomposition 
$$
U= (I+V^*V)^{-1/2} T_{V^*}.
$$
Since $U$ is injective and has a dense range, $U^*$ is injective 
and has a dense range as well, which implies that $V$ and $V^*$ 
are surjective with
$$  
\mathcal R(V^*) =\mathcal R(V^*(I+VV^*)^{-1/2})=L^2(\partial \Omega) 
$$ 
and
$$
\mathcal N(V^*) =\mathcal N (U)= \{ 0\},    
\quad \mathcal N(V)= \mathcal N(U^*)= \{ 0\},
$$
which implies that $V$ and $V^*$ are injective. It follows that 
$V^*(I+VV^*)^{-1/2}$ and $V(I+V^*V)^{-1/2}$ are isomorphisms 
from $H^1(\partial \Omega)$ into $L^2(\partial \Omega)$ and 
from $L^2(\partial \Omega)$ into $H^1(\partial \Omega)$, 
respectively (see Corollary~\ref{cor:3.6}).
Thus, $T_{V^*}$ is bounded, invertible with bounded inverse, and
$$ 
U T_{V^*}^{-1}= (I+V^*V)^{-1/2}.
$$
According to Douglas' theorem (see Theorem~\ref{thm:DT}), 
it follows that  
$$
\mathcal R(U)=\mathcal R((I+V^*V)^{-1/2}).
$$
Consequently, $\mathcal R((I+V^*V)^{-1/2})=  H^1(\partial \Omega)$.
\end{proof} 
 
Questions of equivalence of norms play an important role. 
In the rest of our paper, we adopt the notation ``$\cong$'' 
to indicate the equality between two spaces with equivalence of norms. 

\begin{theorem}
\label{cor:usesDT}
Let $\Omega$ be a bounded Lipschitz domain of $\mathbb R^d$, $d\geq 2$. Then,
$$
H^1(\partial \Omega)
\cong \mathcal H^1(\partial \Omega).
$$
\end{theorem}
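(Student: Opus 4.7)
The algebraic equality $H^1(\partial\Omega) = \mathcal{H}^1(\partial\Omega)$ is already delivered by Theorem~\ref{thm:6.6}, so the plan is to upgrade this set-theoretic identification to an equivalence of Hilbert space norms. The natural norm on $\mathcal{H}^1(\partial\Omega)$, the one that makes $(I+\Lambda^*\Lambda)^{-1}\colon L^2(\partial\Omega)\to \mathcal{H}^1(\partial\Omega)$ an isometric isomorphism, is
$$\|u\|_{\mathcal{H}^1(\partial\Omega)} = \|(I+\Lambda^*\Lambda)u\|_{0,\partial\Omega}.$$
Thus the claim reduces to showing that $(I+\Lambda^*\Lambda)^{-1}\colon L^2(\partial\Omega) \to H^1(\partial\Omega)$ is continuous; once we have that, the open mapping theorem applied to this bounded bijection between Hilbert spaces yields continuity of the inverse and hence the equivalence.

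The key identity is the one derived in the proof of Proposition~\ref{prop:usesDT}, namely
$$(I+\Lambda^*\Lambda)^{-1} = \Gamma\Gamma^*\,(I+\Gamma\Gamma^*)^{-1}.$$
Since $(I+\Gamma\Gamma^*)^{-1}$ is a contraction on $L^2(\partial\Omega)$ (as $I+\Gamma\Gamma^*\ge I$), it suffices to prove that $\Gamma\Gamma^*\colon L^2(\partial\Omega)\to H^1(\partial\Omega)$ is bounded. Given $g\in L^2(\partial\Omega)$, set $u=\Gamma^*g$. By Theorem~\ref{thm:5.5}, $u\in H^1_\Delta(\Omega)$ with $\Delta u=0$ and $\partial_\nu u=g-\Gamma u\in L^2(\partial\Omega)$. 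The first item of Theorem~\ref{thm:5.1} (Ne\v{c}as) then gives
$$\|\Gamma\Gamma^*g\|_{1,\partial\Omega} = \|\Gamma u\|_{1,\partial\Omega} \le c_\Omega\bigl(\|u\|_{1,\Omega}^2 + \|g-\Gamma u\|_{0,\partial\Omega}^2\bigr)^{1/2}.$$
Boundedness of $\Gamma^*\colon L^2(\partial\Omega)\to H^1_\partial(\Omega)$ together with the Ne\v{c}as equivalence of $\|\cdot\|_{\partial,\Omega}$ and $\|\cdot\|_{1,\Omega}$ controls $\|u\|_{1,\Omega}$ by $\|g\|_{0,\partial\Omega}$, while the triangle inequality and boundedness of $\Gamma$ control $\|g-\Gamma u\|_{0,\partial\Omega}$ by the same quantity. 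Hence $\|\Gamma\Gamma^*g\|_{1,\partial\Omega}\le C\|g\|_{0,\partial\Omega}$.

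Combining the two observations, $(I+\Lambda^*\Lambda)^{-1}\colon L^2(\partial\Omega)\to H^1(\partial\Omega)$ is a bounded bijection between Hilbert spaces, so the open mapping theorem supplies a constant $c>0$ with $\|(I+\Lambda^*\Lambda)g\|_{0,\partial\Omega}\le c\|g\|_{1,\partial\Omega}$ for every $g\in H^1(\partial\Omega)$. Together with the reverse inequality obtained from boundedness in Step~2, this is precisely the equivalence $\|\cdot\|_{\mathcal{H}^1(\partial\Omega)}\asymp \|\cdot\|_{1,\partial\Omega}$ we seek.

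The principal obstacle is Step~2: the proof hinges on being able to invoke Ne\v{c}as's regularity result (Theorem~\ref{thm:5.1}) on the Robin-harmonic extension $u=\Gamma^*g$. Everything else (the algebraic identity for $(I+\Lambda^*\Lambda)^{-1}$, the isometric identification of $\mathcal{H}^1$ with $L^2$, and the final appeal to the open mapping theorem) is essentially bookkeeping once that estimate is in place.
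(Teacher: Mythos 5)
Your proposal is correct, but it reaches the norm equivalence by a genuinely different route than the paper. After the common starting point (the algebraic identity $H^1(\partial\Omega)=\mathcal H^1(\partial\Omega)=\mathcal R\big((I+\Lambda^*\Lambda)^{-1}\big)$ from Theorem~\ref{thm:6.6}, and the identity $(I+\Lambda^*\Lambda)^{-1}=\Gamma\Gamma^*(I+\Gamma\Gamma^*)^{-1}$ from Proposition~\ref{prop:usesDT}), you prove one norm inequality directly: the quantitative estimate in item~1 of Theorem~\ref{thm:5.1}, applied to the Robin-harmonic extension $u=\Gamma^*g$, shows $\Gamma\Gamma^*\colon L^2(\partial\Omega)\to H^1(\partial\Omega)$ is bounded, hence $(I+\Lambda^*\Lambda)^{-1}\colon L^2(\partial\Omega)\to H^1(\partial\Omega)$ is a bounded bijection, and the open mapping theorem supplies the reverse inequality. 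The paper instead stays entirely at the level of range equalities: it introduces the embedding $U\colon H^1(\partial\Omega)\to L^2(\partial\Omega)$ and its (unbounded) inverse $V$, invokes Theorem~\ref{prop2:usesDT} to get $H^1(\partial\Omega)=\mathcal R\big((I+V^*V)^{-1/2}\big)$, applies Douglas' theorem in both directions to produce mutually inverse bounded factors $T$ and $S$ relating $(I+\Lambda^*\Lambda)$ and $(I+V^*V)^{1/2}$, and finally identifies $\|(I+V^*V)^{1/2}\cdot\|_{0,\partial\Omega}$ with $\|\cdot\|_{1,\partial\Omega}$ via the isomorphism $T_{V^*}$ of Corollary~\ref{cor:3.6}. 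The two arguments are cousins — Douglas' theorem is itself a closed-graph/open-mapping device for converting range inclusions into norm bounds — but yours is shorter and more concrete: it isolates the single analytic estimate (Ne\v{c}as) that drives the result and dispenses with the auxiliary operator $V$ and Theorem~\ref{prop2:usesDT} altogether, at the cost of using Theorem~\ref{thm:5.1} quantitatively rather than only qualitatively. The paper's route keeps the argument within the operator-theoretic framework built around Douglas' theorem, which is the thematic point of the article. One small presentational remark: the paper never fixes a norm on $\mathcal H^1(\partial\Omega)$ explicitly, so your explicit choice $\|u\|_{\mathcal H^1(\partial\Omega)}=\|(I+\Lambda^*\Lambda)u\|_{0,\partial\Omega}$ is a welcome clarification and is consistent with the norm the paper implicitly works with.
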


\begin{proof} 
We have previously established in Proposition~\ref{prop:usesDT} 
that $\mathcal R(\Gamma \Gamma^*) = \mathcal H^1(\partial \Omega)$ 
and in Theorem~\ref{thm:6.6} that $\mathcal R(\Gamma \Gamma^*) 
=  H^1(\partial \Omega)$. Therefore, the algebraic equality
$$
\mathcal H^1(\partial \Omega) = H^1(\partial \Omega)
$$
holds. All what is needed now is to prove the equivalence of norms. 
To this end, let us consider the embedding operator $U $ from 
$H^1(\partial \Omega)$ into $L^2(\partial \Omega)$ and its inverse $V$.
According to Theorem~\ref{prop2:usesDT}, one has
$$
\mathcal D(V) = \mathcal R(U)= H^1(\partial \Omega) = \mathcal R((I+V^*V)^{-1/2}).
$$
On the other hand, we have shown in Theorem~\ref{thm:6.6} that 
$$
\mathcal R((I+\Lambda ^* \Lambda)^{-1}) = H^1(\partial \Omega).
$$ 
Therefore,
$$  
\mathcal R((I+V^*V)^{-1/2})=\mathcal R((I+\Lambda ^* \Lambda)^{-1}).
$$
By Douglas' theorem (Theorem~\ref{thm:DT}), the inclusion 
$$  
\mathcal R((I+V^*V)^{-1/2})\subset \mathcal R((I+\Lambda ^* \Lambda)^{-1})
$$ 
implies the existence of a bounded operator 
$T: L^2(\partial \Omega) \longrightarrow L^2(\partial \Omega)$ such that
$$ 
\mathcal N(T) = \mathcal N((I+V^*V)^{-1/2})= \{ 0 \} 
$$
and 
\begin{equation*}
(I+V^*V)^{-1/2 }= (I+\Lambda ^* \Lambda)^{-1} T.
\end{equation*}
Again by Douglas' theorem, Theorem~\ref{thm:DT}, the second inclusion 
$$
\mathcal R((I+\Lambda ^* \Lambda)^{-1}) \subset \mathcal R((I+V^*V)^{-1/2})
$$ 
assures the existence of a bounded operator 
$S: L^2(\partial \Omega) \longrightarrow L^2(\partial \Omega)$ such that
$$ 
\mathcal N(S) = \mathcal N((I+\Lambda ^* \Lambda)^{-1/2})= \{ 0 \} 
$$
and 
\begin{equation*} 
(I+\Lambda ^* \Lambda)^{-1} =  (I+V^*V)^{-1/2 } S.
\end{equation*}
Moreover, 
$$  
(I+\Lambda ^* \Lambda)^{-1} 
= (I+V^*V)^{-1/2 } S 
= (I+\Lambda ^* \Lambda)^{-1}  TS 
$$
and 
$$
(I+V^*V)^{-1/2 }
= (I+\Lambda ^* \Lambda)^{-1} T
= (I+V^*V)^{-1/2 } S T.
$$
This implies that $S$ and $T$ are invertible with bounded inverses 
and, more importantly, that $S$ is the inverse of $T$.
Consequently, 
\begin{equation}
\label{eq:14}
T(I+V^*V)^{1/2 }= (I+\Lambda ^* \Lambda)
\end{equation}
and 
\begin{equation}
\label{eq:15}
S (I+\Lambda ^* \Lambda) =  (I+V^*V)^{1/2 }.
\end{equation}
The equalities \eqref{eq:14} and \eqref{eq:15} imply that 
for $g\in H^1(\partial \Omega)$ the norms
$$ 
g \longmapsto \|(I+V^*V)^{1/2} g \| _{0,\partial \Omega}  
\ \ \mbox{and}   \ \  g
\longmapsto \|(I+\Lambda ^* \Lambda) g \| _{0,\partial \Omega} 
$$ 
are equivalent. Now, consider the norm $|\cdot|_{1,\partial \Omega}$ 
defined for a given $g\in H^1(\partial \Omega)$ by
$$
|g|_{1,\partial \Omega} 
= \|(I+V^*V)^{1/2 }g \|_{0,\partial \Omega}.
$$ 
Our next step is to establish the equivalence of the norms 
$|\cdot|_{1,\partial \Omega}$ and $\|\cdot\|_{1,\partial \Omega}$. 
To this end, consider $g\in H^1(\partial \Omega)$. Then, 
$Ug \in \mathcal T^1(\partial \Omega)$, where 
$$ 
\mathcal T^1(\partial \Omega) 
= \left\{ \ (I+V^*V)^{-1/2}y \ | \ y\in L^2(\partial \Omega) \right\}.
$$
It follows that,
\begin{equation*}
\|(I+V^*V)^{1/2 } Ug\|_{0,\partial \Omega } 
=  \|(I+V^*V)^{1/2 }  (I+V^*V)^{-1/2}  T_{V^*}g\|_{0,\partial \Omega }
= \|T_{V^*}g \| _{0, \partial \Omega} 
\end{equation*}
and, since $T_{V^*}$ is an isomorphism from $H^1(\partial \Omega)$ 
into $L^2(\partial \Omega)$  (see Corollary~\ref{cor:3.6}), 
there exists two constant $a,b>0$ such that  
$$
a \|g\|_{1,\partial \Omega} 
\leq |g|_{1,\partial \Omega} 
= \|T_{V^*}g \|_{0,\partial \Omega} \leq b 
\  \|g\|_{1,\partial \Omega}
$$
for all $g\in  H^1(\partial \Omega)$.
\end{proof}

\begin{corollary} 
\label{cor:6.9}
Assume $ 0\leq s\leq 1$. Then, the spaces 
$\mathcal H^s(\partial \Omega)$ form 
an interpolating family. Moreover,
$H^{s}(\partial\Omega) \cong \mathcal H ^{s}(\partial\Omega)$.
\end{corollary}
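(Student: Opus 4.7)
My plan is to combine the endpoint identifications $\mathcal H^0(\partial\Omega) = L^2(\partial\Omega)$ (tautological) and $\mathcal H^1(\partial\Omega) \cong H^1(\partial\Omega)$ (from Theorem~\ref{cor:usesDT}) with a spectral-theoretic interpolation argument for the strictly positive self-adjoint operator $P := I + \Lambda^*\Lambda$ on $L^2(\partial\Omega)$. First I would make each $\mathcal H^s(\partial\Omega)$ into a Hilbert space by setting $\|g\|_{\mathcal H^s} := \|P^s g\|_{0,\partial\Omega}$, so that $P^{-s}$ is a unitary isomorphism from $L^2(\partial\Omega)$ onto $\mathcal H^s(\partial\Omega) = \mathcal D(P^s)$. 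Since $P$ is assumed strictly positive with bounded inverse (as stated before Lemma~\ref{lem:useDT}), the fractional powers $P^s$ are well defined for every real $s$ via the Borel functional calculus.

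Next I would invoke the standard Hilbert-space interpolation theorem for self-adjoint operators: for a strictly positive self-adjoint $P$ with bounded inverse on a Hilbert space $\mathcal K$, the family $\{\mathcal D(P^s)\}_{s \ge 0}$, equipped with the graph norms, forms a complex interpolation scale, with
\[
[\mathcal D(P^{s_0}),\mathcal D(P^{s_1})]_\theta = \mathcal D(P^{(1-\theta)s_0 + \theta s_1})
\]
isometrically for $0 \le \theta \le 1$; this reduces via the spectral theorem to complex interpolation of weighted $L^2$ spaces, which is a classical computation. Applying this with $P = I + \Lambda^*\Lambda$, $s_0 = 0$, $s_1 = 1$ shows at once that $\{\mathcal H^s(\partial\Omega)\}_{s\in[0,1]}$ is an interpolating family, settling the first assertion of the corollary.

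For the norm equivalence $H^s(\partial\Omega) \cong \mathcal H^s(\partial\Omega)$, I would use the fact recalled in Section~\ref{sec:04} that the Sobolev scale $\{H^s(\partial\Omega)\}_{0\le s\le 1}$ on a bounded Lipschitz boundary is itself characterized by interpolation between $L^2(\partial\Omega)$ and $H^1(\partial\Omega)$. Combining the endpoint equivalences $\mathcal H^0(\partial\Omega) = L^2(\partial\Omega)$ and $\mathcal H^1(\partial\Omega) \cong H^1(\partial\Omega)$ with the functoriality of complex interpolation then yields
\[
H^s(\partial\Omega) = [L^2(\partial\Omega), H^1(\partial\Omega)]_s \cong [\mathcal H^0(\partial\Omega), \mathcal H^1(\partial\Omega)]_s = \mathcal H^s(\partial\Omega),
\]
with equivalent norms, and Theorem~\ref{thm:6.2} provides an independent check at $s = 1/2$.

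The main obstacle I anticipate is not the abstract interpolation step, which is routine once $P$ has been identified as self-adjoint with bounded inverse, but rather verifying the compatibility between the two interpolation functors involved: the one used to define $H^s(\partial\Omega)$ intrinsically via charts and partitions of unity on the Lipschitz boundary, and the one arising from the abstract spectral scale of $P$ on $L^2(\partial\Omega)$. Since all spaces in play are Hilbert and the endpoint identifications are bi-Lipschitz, real and complex interpolation agree up to equivalence on this scale, so the requisite comparison is available from the references cited in Section~\ref{sec:04}; the only care needed is to formulate $H^s(\partial\Omega)$ through the interpolation characterization rather than through the chart-based definition during the final identification.
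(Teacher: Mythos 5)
Your proposal is correct and follows essentially the same route the paper intends: the corollary is stated without proof precisely because it is meant to follow by interpolating between the endpoint identifications $\mathcal H^0(\partial\Omega)=L^2(\partial\Omega)$ and $\mathcal H^1(\partial\Omega)\cong H^1(\partial\Omega)$ (Theorem~\ref{cor:usesDT}), using the spectral interpolation scale $\mathcal D\bigl((I+\Lambda^*\Lambda)^{s}\bigr)$ together with the interpolation characterization of $H^s(\partial\Omega)$ on a Lipschitz boundary. Your explicit attention to the compatibility of the chart-based and interpolation definitions of $H^s(\partial\Omega)$, and to the agreement of real and complex interpolation for Hilbert couples, supplies details the paper leaves implicit but does not change the argument.
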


For $ 0\leq s\leq 1$, the spaces $\mathcal H^{-s}(\partial \Omega)$ 
are the dual spaces of $\mathcal H^s(\partial \Omega)$. 
This implies the following result.

\begin{corollary} 
\label{cor:6.10}
Assume $-1\leq s< 0$. Then the spaces $\mathcal H^s(\partial \Omega)$ 
form an interpolating family. Moreover,
$H^{s}(\partial\Omega)
\cong \mathcal H ^{s}(\partial\Omega)$.
\end{corollary}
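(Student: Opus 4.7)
The plan is to derive Corollary~\ref{cor:6.10} from Corollary~\ref{cor:6.9} via duality. Fix $s$ with $-1 \leq s < 0$ and set $r = -s \in (0,1]$. Two dualities are in play. On the abstract side, $\mathcal{H}^s(\partial\Omega)$ is defined (in the paragraph preceding the statement) to be the topological dual of $\mathcal{H}^r(\partial\Omega)$. On the classical side, the Sobolev space $H^s(\partial\Omega)$ on a Lipschitz boundary is, by definition, the dual of $H^r(\partial\Omega)$ with respect to the pairing extending the $L^2(\partial\Omega)$ inner product (the case $r = 1/2$ is recalled in Section~\ref{sec:04}; the full range follows by interpolation).

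First I would invoke Corollary~\ref{cor:6.9} at the exponent $r$, which gives $\mathcal{H}^r(\partial\Omega) \cong H^r(\partial\Omega)$: the two spaces coincide algebraically and carry equivalent Hilbertian norms. Since equivalence of norms is preserved under passing to topological duals (a bounded invertible operator between Hilbert spaces induces a bounded invertible adjoint, with reciprocal bounds), this produces
\[
\mathcal{H}^s(\partial\Omega) = (\mathcal{H}^r(\partial\Omega))^* \cong (H^r(\partial\Omega))^* = H^s(\partial\Omega),
\]
yielding the asserted identification $H^s(\partial\Omega) \cong \mathcal{H}^s(\partial\Omega)$.

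For the interpolation statement, I would transport the interpolating structure through the same duality. The family $\{\mathcal{H}^s(\partial\Omega)\}_{0 \leq s \leq 1}$ obtained from Corollary~\ref{cor:6.9} is, by its construction via fractional powers of the positive self-adjoint operator $I+\Lambda^*\Lambda$, the interpolation scale between $L^2(\partial\Omega)$ and $\mathcal{H}^1(\partial\Omega)$. Since Hilbert spaces are reflexive, dualising a compatible interpolation couple yields an interpolation couple whose intermediate spaces are the duals of the intermediate spaces of the original couple (with parameter $\theta$ replaced by $1-\theta$). Applied here, this transfers the interpolation property to $\{\mathcal{H}^s(\partial\Omega)\}_{-1 \leq s \leq 0}$.

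The main obstacle I foresee is the coherent matching of the two duality pairings used above: the abstract dual pairing defining $\mathcal{H}^s(\partial\Omega)$ must be identified with the $L^2(\partial\Omega)$-extended pairing between $H^s(\partial\Omega)$ and $H^r(\partial\Omega)$. This reduces to checking that both pairings agree on a dense subspace. Density of $L^2(\partial\Omega)$ in $\mathcal{H}^s(\partial\Omega)$ follows from Lemma~\ref{lem:4.2} combined with the spectral calculus for $I+\Lambda^*\Lambda$ (which makes $\mathcal{R}((I+\Lambda^*\Lambda)^{r})$ dense in $L^2(\partial\Omega)$), while on $L^2(\partial\Omega)$ both pairings reduce to the $L^2$ inner product; the equivalence of norms then extends by continuity to the full space.
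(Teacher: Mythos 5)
Your proposal is correct and follows the same route the paper intends: the paper's entire justification is the sentence preceding the corollary, declaring $\mathcal H^{-s}(\partial\Omega)$ to be the dual of $\mathcal H^{s}(\partial\Omega)$ and deducing the result from Corollary~\ref{cor:6.9} by duality. You simply make explicit the details the paper leaves implicit (compatibility of the two duality pairings via density of $L^2(\partial\Omega)$, and the duality theorem for interpolation scales), which is a faithful elaboration rather than a different argument.
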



{\bf Acknowledgments.} This research is part of first author's Ph.D. project, 
which is carried out at Moulay Ismail University, Meknes. Torres has been
supported by the R\&D Unit CIDMA and the Portuguese Foundation 
for Science and Technology (FCT), within project UID/MAT/04106/2013.



\end{document}